\documentclass[11pt, a4paper]{article}
\usepackage[utf8]{inputenc}
\usepackage[T1]{fontenc}
\usepackage{amsmath, amssymb, amsthm, amsfonts}
\usepackage{geometry}
\usepackage{bm}
\usepackage{subcaption}
\usepackage{graphicx} 
\usepackage{mathrsfs}

\usepackage{algorithm}
\usepackage{algorithmic}
\usepackage{amsmath, amsthm, amssymb, amsfonts}

\usepackage[numbers,square]{natbib}
\usepackage[utf8]{inputenc}	
\usepackage[T1]{fontenc}	
\usepackage{xcolor}		
\usepackage[colorlinks = true,
            linkcolor = purple,
            urlcolor  = blue,
            citecolor = cyan,
            anchorcolor = black]{hyperref}	
\usepackage{booktabs} 		
\usepackage{nicefrac}		
\usepackage{microtype}		
\usepackage{lineno}		
\usepackage{float}			

\usepackage{lipsum}		

\usepackage{newfloat}
\usepackage{xcolor}
\usepackage{mathtools, amssymb, amsfonts, dsfont, nccmath}
\usepackage{microtype}
\usepackage{subcaption}
\usepackage{algorithmic}
\usepackage{graphicx}
\usepackage{textcomp}
\usepackage{xcolor}
\usepackage{amsthm} 
\usepackage{xcolor}

\newtheorem{lemma}{Lemma}
\newtheorem{theorem}{Theorem}
\theoremstyle{definition}
\newtheorem{assumption}{Assumption}
\newtheorem{definition}{Definition}
\newtheorem{remark}{Remark}
\newtheorem{proposition}{Proposition}

\DeclareFloatingEnvironment[name={Supplementary Figure}]{suppfigure}
\usepackage{sidecap}
\sidecaptionvpos{figure}{c}

\newcommand{\M}{\mathcal{M}}

\newcommand{\diff}{\mathrm{d}}

\title{Linear-Quadratic Mean Field Games with Hybrid Local-Global Interactions on Manifolds\thanks{This work is supported by AFOSR Grant FA9550-23-1-0015 and NSERC Grant RGPIN-2019-05336.}}

\usepackage{eso-pic}
\usepackage{tikz}
\usepackage{xcolor}
\PassOptionsToPackage{colorlinks=true,linkcolor=gray,urlcolor=gray}{hyperref}

\usepackage{titling}
\usepackage{footmisc}
\newcommand{\Author}[2]{
  \textbf{#1}\textsuperscript{#2} %
}

\author{
  \Author{Tao Zhang}{1}
}

\date{%
  \textsuperscript{1}Department of Electrical and Computer Engineering, McGill University
}

\begin{document}

\maketitle

\begin{abstract}
This paper studies linear-quadratic mean field games on compact Riemannian manifolds with a hybrid interaction topology. The network structure is a superposition of a deterministic graph for local geometric connectivity and a stochastic directed graph for non-local interactions. The global graph is constructed via random sampling based on a continuous kernel $K$. The out-degree of each node scales as $\Theta(\log N)$ {or as $\Theta(N)$ to represent a sparse or dense network, respectively}. In the infinite-population limit, the continuum system is governed by a coupled system of forward-backward partial differential equations, where the dynamics of the expected state incorporate the {integral operator corresponding to the non-local sampling}. {The existence of a Nash equilibrium is established for this limit system.} Furthermore, {the approximation error is analyzed} using operator concentration inequalities and analytic semigroup theory. {Non-asymptotic high-probability error bounds between the finite-population empirical state and the continuum limit are derived.} The convergence rates {differ depending on} the two topological regimes. Under the dense regime, the tracking error {exhibits a polynomial decay rate dependent on the manifold dimension and Sobolev regularity}, while under the sparse regime, the error decays at a rate of $\mathcal{O}((\log N)^{-1/2})$.
\end{abstract}
\vspace{0.35cm}




\section{Introduction}

Mean Field Game (MFG) theory \cite{lasry2006jeux, lasry2007mean, huang2006large,huang2007large} studies the asymptotic behavior of large systems of interacting agents. By invoking the assumption of exchangeability, the interaction of an individual with the population is approximated by a deterministic mean field, which decouples the dynamics. This framework is {applied to} financial engineering \cite{firoozi2017anoptimal, firoozi2017ameanfield}, smart grid energy management \cite{kizilkale2019integral}, traffic routing \cite{huang2021dynamic}, epidemic control \cite{cho2020mean}, and multi-agent reinforcement learning \cite{lauriere2022learning, cui2021approximately}.

Standard MFG theory assumes a complete interaction graph where agents are asymptotically indistinguishable. To model network heterogeneity, Graphon Mean Field Games (GMFG) \cite{gao2020linear, caines2021graphon,hu2023graphon,bayraktar2023graphon} use graphons (see \cite{lovasz2012large}) to describe the limit of graph sequences. As the population size $N \to \infty$, the discrete adjacency matrix converges to a bounded compact integral operator. The graphon framework assumes the graph sequence is dense, requiring the average nodal degree to scale as $\Theta(N)$. Developments in this direction include studies on common noise \cite{bayraktar2023graphon,zhang2025perturbation}, nonlinear dynamics \cite{coppini2024nonlinear}, and stationary analysis \cite{tchuendom2022stationary}.

For sparse networks, the Law of Large Numbers does not apply in the standard manner, and the formulation of MFGs on such structures remains less developed. Borgs et al. \cite{borgs2018sparse} introduced the theory of graphexes to study limits of sparse graphs. Interacting diffusion and games on locally tree-like sparse graphs are analyzed via local weak convergence \cite{lacker2022, oliveira2019interacting}. Cui, Fabian, and Koeppl \cite{fabian2024learning, cui2021approximately} applied local weak limits to analyze reinforcement learning in MFGs on sparse networks. Recently, a general theory in terms of graphexon \cite{caines2022embedded,caines2024sparse} was developed to define measure limits for both dense and sparse networks. Extending this formulation, the Laplexion concept characterizes local diffusion processes on network limits \cite{caines2025ifac_mean,caines2025cdc_mean,zhang2026IFAC}. {However, these existing limits mainly address pure topological limits or single-scale local diffusion, which are insufficient to capture the coupled multi-scale effects when long-range random jumps and local geometric diffusion coexist.}

In practice, many complex networks exhibit both high local clustering and long-range connections \cite{watts1998collective, barranca2015low}. This hybrid network structure is motivated by large-scale multi-agent systems where physical local diffusion and stochastic global transmission coexist. In spatial epidemic control games, for instance, the deterministic graph Laplacian captures the local virus diffusion caused by physical contact among adjacent communities, while the stochastic global graph models long-range disease transmission induced by intercity travel. Agents optimize their intervention efforts by anticipating both local geometric diffusion and non-local topological jumps. Similarly, in smart grid systems, generators are coupled locally via physical transmission lines, while simultaneously interacting over long distances through random bilateral trading contracts. 

To capture this topology, we model the underlying network on a compact Riemannian manifold as a superposition of a deterministic local graph and a stochastic global graph. The local component models geometric interactions through nearest-neighbor coupling, whose continuum limit corresponds to a geometric realization of the Laplexion structure. As the local connectivity radius $r_N \to 0$, the associated spatial graph Laplacian converges to the Laplace-Beltrami operator $\frac{1}{2}\Delta_g$ \cite{rosenberg1997laplacian,zhang2026IFAC}. The global component models global interactions formed by random sampling based on a {continuous kernel $K$}. The out-degree scales as $\Theta(\log N)$ in the sparse regime and $\Theta(N)$ in the dense regime.

This paper studies Linear-Quadratic-Gaussian (LQG) games on this hybrid graph structure. We derive the continuum limit characterized by a system of Forward-Backward Partial Differential Equations (FBPDEs). The expected state $\mathbf{m}_t$ and the adjoint variable $\Theta_t$ are governed by the operator
$ \mathcal{A} \triangleq a\mathbb{I} + c\mathbf{G} + \frac{\gamma}{2} \Delta_g$
where the dynamics of $\mathbf{m}_t$ incorporate the deterministic integral operator $\mathbf{G}$ characterizing the global sampling process. We prove the existence of the Nash solution for this limit system.

Furthermore, we evaluate the $L^2(\mathcal{M})$ approximation error between the finite-population empirical state $\bar{\mathbf{x}}_t^N$ and the limit state $\mathbf{m}_t$. {Specifically, for any given confidence level $\delta \in (0,1)$, the tracking error is bounded by a deterministic decay rate with probability at least $1-\delta$.} The convergence rates differ {depending on} the topological regimes. For the sparse regime with an out-degree scaling of $\Theta(\log N)$, the supremum error over time $t \in [0,T]$ is bounded by $\mathcal{O}((\log N)^{-1/2})$. For the dense regime scaling as $\Theta(N)$, the error {decays at a polynomial rate that} depends on the manifold dimension $p$ and the Sobolev regularity $s$. These bounds are established using concentration inequalities for random variables in the Hilbert-Schmidt operator space.

The paper is organized as follows. Section \ref{sec:random_graph} formulates the random graph sequence. Section \ref{sec:LQGHGMFG} derives the limit FBPDE system and establishes the well-posedness of the Nash solution. Section \ref{sec:error_analysis} provides the probability error bounds for the sparse and dense network limits. Simulation results are presented in Section \ref{sec:simulation}. Section \ref{sec:conclusion} concludes the paper.
\section{Preliminaries}\label{sec:preliminaries}

Let $(\mathcal{M}, g)$ be a $p$-dimensional compact connected Riemannian manifold without boundary. Let $d_g$ denote the geodesic distance and $\diff V_g$ denote the Riemannian volume measure. We denote the volume of the manifold by $\mathrm{Vol}(\mathcal{M})\triangleq \int_{\mathcal{M}} \diff V_g$, and for simplicity, we assume $\text{Vol}(\mathcal{M}) = 1$ throughout the paper.

Let $L^2(\mathcal{M})$ be the Hilbert space of square-integrable real-valued functions on $\mathcal{M}$ with respect to $\diff V_g$, equipped with the norm
\begin{equation}
\|f\|_{L^2} \triangleq \left( \int_{\mathcal{M}} |f(\alpha)|^2 \diff V_g(\alpha) \right)^{1/2}.
\end{equation}

Let $\Delta_g$ denote the Laplace-Beltrami operator on $\mathcal{M}$. Let $0 = \lambda_0 < \lambda_1 \le \dots$ be the eigenvalues of $-\Delta_g$ and let $\{\phi_k\}_{k=0}^\infty$ be the associated complete orthonormal basis of eigenfunctions in $L^2(\mathcal{M})$. For any real $s \ge 0$, the fractional Sobolev space $H^s(\mathcal{M})$ is defined as the space of functions $f \in L^2(\mathcal{M})$ satisfying
\begin{equation}
\|f\|_{H^s} \triangleq \left( \sum_{k=0}^\infty (1+\lambda_k)^s |\langle f, \phi_k \rangle_{L^2}|^2 \right)^{1/2} < \infty.
\end{equation}

Let $C^k(\mathcal{M})$ denote the space of $k$-times continuously differentiable functions on $\mathcal{M}$. Let $C^{0,\alpha}(\mathcal{M})$ denote the space of Hölder continuous functions with exponent $\alpha \in (0,1]$. 

For any two Banach spaces $X$ and $Y$, the notation $X \hookrightarrow Y$ indicates that $X$ is continuously embedded into $Y$. This implies there exists a constant $C > 0$ such that $\|f\|_Y \le C \|f\|_X$ for all $f \in X$.

Let $\mathcal{L}(X, Y)$ denote the Banach space of bounded linear operators from $X$ to $Y$, equipped with the induced operator norm
\begin{equation}
\|A\|_{X \to Y} \triangleq \sup_{f \in X, f \neq 0} \frac{\|Af\|_Y}{\|f\|_X}.
\end{equation}
When $X = Y$, the notation is abbreviated to $\mathcal{L}(X)$ and the corresponding norm is denoted by $\|\cdot\|_{\mathcal{L}(X)}$.

For any finite set $S$, $|S|$ denotes its cardinality. For a measurable subset $A \subseteq \mathcal{M}$, $\mathrm{Vol}(A)$ denotes its Riemannian volume $\int_A \diff V_g$. The indicator function of a set $A$ is denoted by $\mathbf{1}_A$. Define $\mathbf{1}_{\mathcal{M}}$ as the constant $1$-valued function on $\M$.

\section{Random Graph Model}\label{sec:random_graph}
The underlying interaction network is formalized as a hybrid graph framework defined on a single geometric state space. As illustrated in Figure \ref{fig:hybrid_graph}, this hybrid topology is modeled as the superposition of two complementary structures: a deterministic graph core that dictates short-range geometric diffusion, and a stochastic graph that governs long-range non-local information jumps. This architecture ensures that local and global interactions are coupled within the same spatial domain rather than operating in isolated network layers.
\begin{figure}
    \centering
    \includegraphics[width=0.6\linewidth]{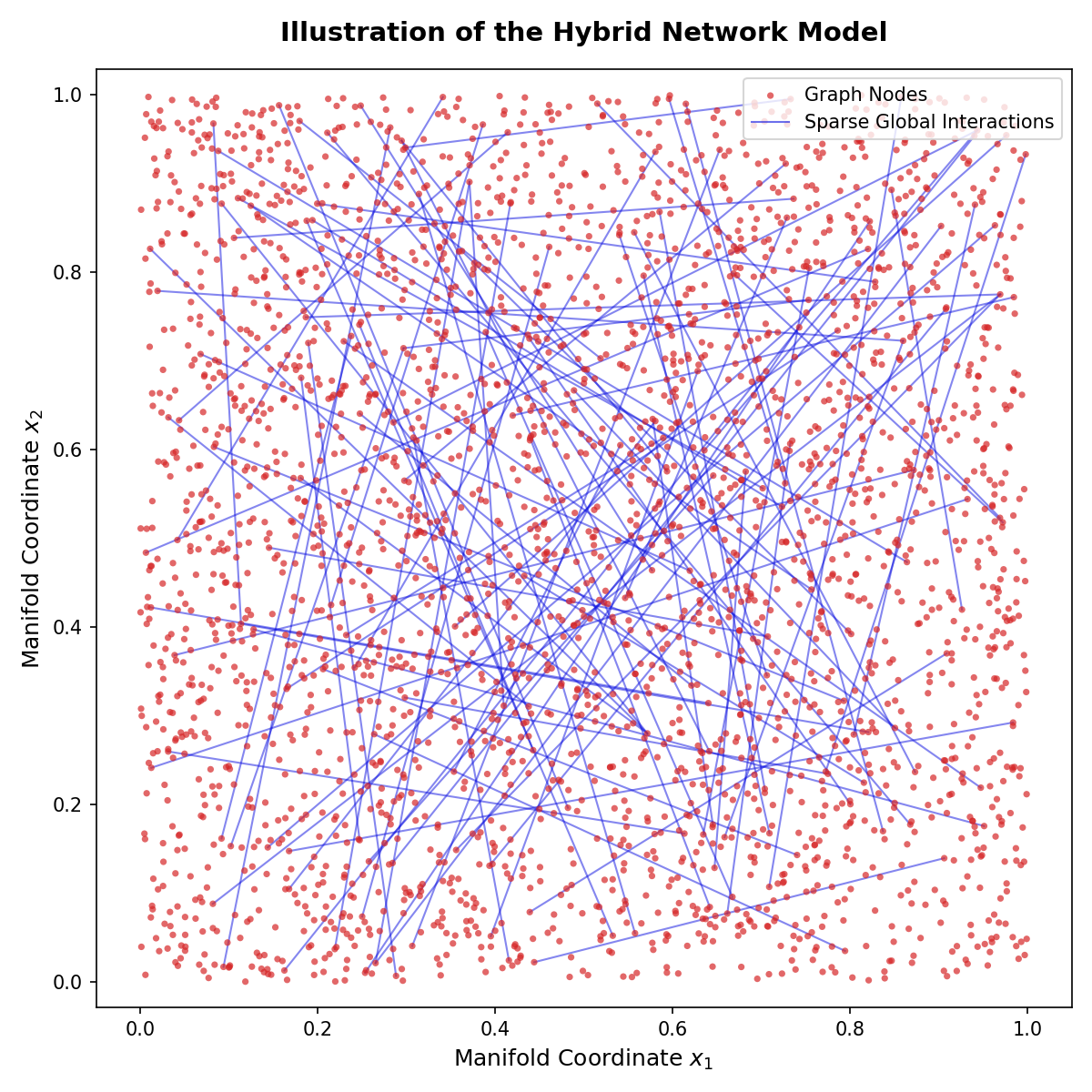}
    \caption{Hybrid Network Model. The red dots represent the agent clusters densely embedded in the underlying manifold $\mathcal{M}$, providing local geometric connectivity. The blue lines represent the sparse, random, non-local interactions that couple distant agents.}
    \label{fig:hybrid_graph}
\end{figure}

We work on a complete probability space constructed as the product
\begin{equation}
(\Omega,\mathcal F,\mathbb P) = (\Omega_w\times\Omega_z, \mathcal{F}_w\otimes\mathcal{F}_z, \mathbb P_w\otimes\mathbb P_z),
\end{equation}
where the two components represent independent sources of randomness.

The first component $(\Omega_w,\mathcal{F}_w,\mathbb P_w)$ supports a family of independent standard Wiener processes $\{w_t^i\}_{i\in\mathbb N}$, which model the idiosyncratic diffusion noise affecting each agent. The associated filtration $\{\mathcal F_w(t)\}_{t\ge0}$ is assumed to satisfy the usual conditions.

The second component $(\Omega_z,\mathcal{F}_z,\mathbb P_z)$ accounts for the random sampling of global neighbors in the sparse interaction structure. All sampling variables are assumed to be independent of the Brownian motions.

Throughout the paper, for any integrable random variable or process $X$, the notation $\mathbb E_z[X]$ denotes the conditional expectation with respect to the sampling randomness only, namely
\begin{equation}
\mathbb E_z[X](\omega_w) = \int_{\Omega_z} X(\omega_w,\omega_z) d\mathbb P_z(\omega_z).
\end{equation}

\subsection{Local Graph Interaction}

The local graph represents nearest-neighbor geometric interactions. To establish a continuum limit for these short-range discrete diffusions, the network nodes must be embedded into a continuous geometric space. 

Let $(\mathcal{M}, g)$ be a compact, connected Riemannian manifold of dimension $p$, equipped with its geodesic distance $d_g$ and Riemannian volume measure $\diff V_g$.

\begin{definition}\label{def:embedded_graph_sequence_simple}
Let $\mathcal{G}^N=(V^{N},E^{N})$ be a finite graph without self-loops, with node set $V^{N} \triangleq \{1,...,{N}\}$ and undirected edge set $E^{N}$. A sequence $\{(\mathcal{G}^N,\varphi^{N})\}_{{N}=1}^\infty$ forms an embedded graph sequence if each graph $\mathcal{G}^N$ is embedded into $\mathcal{M}$ via a node mapping $\varphi^{N} :V^{N} \rightarrow \mathcal{M}$, assigning a spatial location $\alpha_i^{N} = \varphi^{N}(i) \in \mathcal{M}$ to every node $i \in V^{N}$.
\end{definition}
Let $\{\mathcal{P}_N\}_{N\ge 1}$ be a sequence of partitions of the manifold $\mathcal{M}$ into $N$ disjoint measurable sets $\{A_1, \dots, A_N\}$. Let $\delta_N$ denote the maximum diameter of the manifold partition $\{A_i\}_{i=1}^N$. Assume $\text{Vol}(A_i) = 1/N$ for each $i=1,\cdots,N$. Suppose that for any network size $N$ and any discrete node index $i \in V^N$, $\varphi^N$ satisfies $\alpha_i^N = \varphi^N(i) \in A_i$.

To construct the local discrete interaction operators, introduce the connectivity radius $r_N > 0$ satisfying $r_N \to 0$ as $N \to \infty$. The local neighbor set $\mathcal{N}_i$ of node $i$ is defined by
\begin{equation}
    \mathcal{N}_i \triangleq \left\{ j \in V^N \setminus \{i\} : d_g(\alpha_i^N, \alpha_j^N) \le r_N \right\}.
\end{equation}
For each node $i$ with location $\alpha_i^N\in \M$, we choose an orthonormal frame $E_i=(e_1,\dots,e_q)$ of $T_{\alpha_i^N}\mathcal{M}$. 
We identify $T_{\alpha_i^N}\mathcal{M}\simeq \mathbb{R}^p$ via $E_i$.
For each neighbor $j\in\mathcal{N}_i$ we define the (geodesic) displacement vector in the tangent space $T_{\alpha_i^{N}}\mathcal{M}$ by the exponential map $ v_{ij}\;\triangleq\;\exp_{\alpha_i^N}^{-1}(\alpha_j^N)\ \in\ T_{\alpha_i^N}\mathcal{M}$
and write it in coordinates under $E_i$ as $v_{ij}\;=\;\sum_{k=1}^p v_{ij}^k\,e_k$, i.e., $v_{ij}\equiv \begin{bmatrix} v_{ij}^1 & \cdots & v_{ij}^p \end{bmatrix}^{\!\top}\in\mathbb{R}^p$.

Let the symmetric edge weights be $w_{ij}^{(N)} = C_w r_N^{-2}$ for $j \in \mathcal{N}_i$ and $w_{ij}^{(N)} = 0$ otherwise, where $C_w$ is a dimension-dependent scaling constant. Let $\mathbb{T}_N \in \mathbb{R}^{N \times N}$ denote the discrete graph Laplacian matrix associated with $\mathcal{G}^N$. Its algebraic action on a state vector ${v} \in \mathbb{R}^N$ is given by
\begin{equation}
    [\mathbb{T}_N {v}]_i \triangleq \frac{1}{|\mathcal{N}_i|}\sum_{j\in\mathcal{N}_i} w_{ij}^{(N)} (v_j - v_i), \qquad i \in V^N.
\end{equation}

 The graph Laplacian operator $\mathbf{L}_N : L^2(\mathcal{M}) \to L^2(\mathcal{M})$ is defined by the local integral averages
\begin{equation}\label{eq:def_L_N}
    (\mathbf{L}_N \psi)(\alpha) \triangleq \sum_{i=1}^N \mathbf{1}_{A_i}(\alpha) \frac{1}{|\mathcal{N}_i|}\sum_{j\in\mathcal{N}_i} w_{ij}^{(N)} \big(\psi_{A_j} - \psi_{A_i}\big)
\end{equation}
 for any function $\psi \in L^2(\mathcal{M})$ with 
 \begin{equation}
\psi_{A_j} \triangleq \frac{1}{\text{Vol}(A_j)}\int_{A_j}\psi dV_g.
\end{equation}

The geometric consistency of the embedded graph sequence is characterized by the local empirical moments.

\begin{assumption}[\cite{zhang2026IFAC}]\label{as:uniform_isotropic_graph}
Assume $\delta_N = \mathcal{O}(N^{-1/p})$ and $r_N = \mathcal{O}(\delta_N)$. Suppose there exists a constant $C_u > 0$ such that for all nodes $i \in V^N$,
\begin{align}
    \left\| \frac{1}{|\mathcal{N}_i|} \sum_{j\in\mathcal{N}_i} w_{ij}^{(N)} v_{ij} \right\| &\le C_u r_N, \\
    \left\| \frac{1}{|\mathcal{N}_i|} \sum_{j\in\mathcal{N}_i} w^{(N)}_{ij} v_{ij} v_{ij}^{\top} - I_p \right\| &\le C_u r_N,
\end{align}
where $I_p$ is the $p$-dimensional identity matrix. 
\end{assumption}

\begin{lemma}[\cite{zhang2026IFAC}] \label{prop:laplace_convergence}
    Under Assumption \ref{as:uniform_isotropic_graph}, for any test function $\psi \in C^3(\mathcal{M})$, the graph Laplacian converges to the Laplace-Beltrami operator
    \begin{equation}\label{eq:laplace_limit}
         \lim_{{N} \to \infty} \mathbf{L}_N \psi = \frac{1}{2} \Delta_g \psi.
    \end{equation}
\end{lemma}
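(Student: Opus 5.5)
The plan is a pointwise second-order Taylor expansion in normal coordinates at each node, followed by control of the cell-averaging errors. Convergence is understood in $L^\infty(\mathcal{M})$, hence also in $L^2(\mathcal{M})$. Fix $\psi\in C^3(\mathcal{M})$ and $\alpha\in A_i$. Then
\[
(\mathbf{L}_N\psi)(\alpha)=\frac{1}{|\mathcal{N}_i|}\sum_{j\in\mathcal{N}_i} w_{ij}^{(N)}\big(\psi_{A_j}-\psi_{A_i}\big),
\]
and I would write $\psi_{A_j}-\psi_{A_i}=\big(\psi(\alpha_j^N)-\psi(\alpha_i^N)\big)+\big(\psi_{A_j}-\psi(\alpha_j^N)\big)-\big(\psi_{A_i}-\psi(\alpha_i^N)\big)$. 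The first bracket is the main term, and the other two are averaging errors.

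For the main term, I expand along the geodesic $\gamma(t)=\exp_{\alpha_i^N}(t v_{ij})$. Since $r_N\to0$, for large $N$ the radius $r_N$ is below the injectivity radius of the compact manifold, so $v_{ij}$ is well defined and $|v_{ij}|=d_g(\alpha_i^N,\alpha_j^N)\le r_N$. Taylor's theorem with the Riemannian Hessian, using $\nabla_{\dot\gamma}\dot\gamma=0$, gives
\[
\psi(\alpha_j^N)-\psi(\alpha_i^N)=\langle\nabla\psi(\alpha_i^N),v_{ij}\rangle+\tfrac12 v_{ij}^\top\,\mathrm{Hess}\,\psi(\alpha_i^N)\,v_{ij}+R_{ij},\qquad |R_{ij}|\le C\|\psi\|_{C^3}r_N^3 .
\]
Averaging with the weights $w_{ij}^{(N)}=C_w r_N^{-2}$ and applying Assumption~\ref{as:uniform_isotropic_graph} bounds the first-order term by $\|\nabla\psi\|_\infty C_u r_N$. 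The second-order term equals $\tfrac12\mathrm{tr}\big(\mathrm{Hess}\,\psi(\alpha_i^N)\cdot\frac{1}{|\mathcal{N}_i|}\sum_j w_{ij}v_{ij}v_{ij}^\top\big)=\tfrac12\Delta_g\psi(\alpha_i^N)+O(r_N)$, using the second moment condition and the fact that the trace of the Hessian in an orthonormal frame is $\Delta_g\psi$. The remainder contributes $C_w r_N^{-2}\cdot Cr_N^3=O(r_N)$. Finally, $|\tfrac12\Delta_g\psi(\alpha_i^N)-\tfrac12\Delta_g\psi(\alpha)|\le C\|\psi\|_{C^3}\delta_N$ because $\alpha,\alpha_i^N\in A_i$.

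The main obstacle is the averaging error. A crude bound gives $|\psi_{A_j}-\psi(\alpha_j^N)|\le\|\nabla\psi\|_\infty\delta_N$, which after multiplication by $r_N^{-2}$ is $O(\delta_N/r_N^2)$ and does not vanish when $r_N\asymp\delta_N$. To handle this I would expand $\psi$ to second order around $\alpha_j^N$ inside $A_j$:
\[
\psi_{A_j}-\psi(\alpha_j^N)=\langle\nabla\psi(\alpha_j^N),b_j\rangle+O(\delta_N^2),\qquad b_j\triangleq\frac{1}{\mathrm{Vol}(A_j)}\int_{A_j}\exp_{\alpha_j^N}^{-1}(\beta)\,\diff V_g(\beta).
\]
Here $b_j$ is the barycentric offset of the node within its cell. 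The $O(\delta_N^2)$ part, weighted by $r_N^{-2}$, stays bounded but is not small. The first-order part cancels between $j$ and $i$ only to order $|\nabla\psi(\alpha_j^N)-\nabla\psi(\alpha_i^N)|\,|b|+|b_j-b_i|$, after parallel transport. Because of this, I expect the argument to require, beyond the stated moment bounds, either that the nodes sit at cell barycenters (so $b_j=0$) or the implicit standing regime $\delta_N=o(r_N^2)$ from \cite{zhang2026IFAC} under which these terms are $o(1)$. I would check which of these is intended and, assuming it holds, conclude that
\[
\sup_\alpha\big|(\mathbf{L}_N\psi)(\alpha)-\tfrac12\Delta_g\psi(\alpha)\big|\to0 .
\]
This uniform convergence implies convergence in $L^2(\mathcal{M})$ since $\mathrm{Vol}(\mathcal{M})=1$.
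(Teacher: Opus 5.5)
Your main-term computation is correct: the geodesic Taylor expansion with remainder $C\|\psi\|_{C^3}r_N^3$, the two moment bounds of Assumption~\ref{as:uniform_isotropic_graph}, and $\mathrm{tr}\,\mathrm{Hess}\,\psi=\Delta_g\psi$. It is exactly the computation the paper carries out in Appendix~\ref{pf:semigroup_difference_spectral}; the lemma itself is only cited from \cite{zhang2026IFAC}. The gap is the averaging error. Your argument leaves it open, and neither of your repairs closes it. The regime $\delta_N=o(r_N^2)$ is excluded by Assumption~\ref{as:uniform_isotropic_graph}, since $r_N=\mathcal{O}(\delta_N)$ forces $r_N^2=\mathcal{O}(\delta_N^2)=o(\delta_N)$. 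Barycentric placement removes only the first-order offsets $b_j$. The second-order parts of $e_k\triangleq\psi_{A_k}-\psi(\alpha_k^N)$ are of size $\delta_N^2$, and their differences between neighbours, multiplied by $r_N^{-2}$, stay $\mathcal{O}(1)$ unless neighbouring cells have second-moment tensors agreeing to $o(r_N^2)$, as you noticed yourself.

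Your suspicion is in fact right: the cell-average statement does not follow from the stated hypotheses. Take $\mathcal{M}=\mathbb{R}/\mathbb{Z}$, $h=1/N$, $\alpha_i^N=(i-\tfrac12)h$, $r_N=h$, $C_w=1$; the moment conditions then hold exactly. Let $A_{2k-1}=(2k-2)h+\big([0,\tfrac34h)\cup[\tfrac54h,\tfrac32h)\big)$ and let $A_{2k}$ be its complement in $[(2k-2)h,2kh)$. Every cell has volume $h$, diameter at most $\tfrac32h$, contains its node, and the offsets are $b_i=\pm h/8$, alternating in $i$. Then $e_{i+1}-2e_i+e_{i-1}=-4b_i\psi'(\alpha_i^N)+\mathcal{O}(h^2)$, so on $A_i$ one gets $\mathbf{L}_N\psi-\tfrac12\psi''=\mp\psi'(\alpha_i^N)/(4h)+\mathcal{O}(1)$, which diverges in both $L^\infty$ and $L^2$.

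The way out, and what the paper actually uses, is to apply $\mathbf{L}_N$ to the nodal interpolant. In Appendix~\ref{pf:semigroup_difference_spectral} only $\mathbf{L}_N P_N\phi$ ever appears. Since $(P_N\phi)_{A_k}=\phi(\alpha_k^N)$ exactly, all of your averaging terms vanish identically. Your main-term estimate alone then gives $\|\mathbf{L}_N P_N\psi-\tfrac12 P_N\Delta_g\psi\|_{L^\infty}\le C r_N\|\psi\|_{C^3}$, because both functions are constant on cells. Together with your $\delta_N$ term, this yields $\mathbf{L}_N P_N\psi\to\tfrac12\Delta_g\psi$ uniformly at rate $\mathcal{O}(r_N+\delta_N)$. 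That is the version you can prove under Assumption~\ref{as:uniform_isotropic_graph}, and it is the one needed downstream. If you want the literal $\mathbf{L}_N\psi$ with cell averages, add an explicit cell-regularity hypothesis, e.g.\ $\max_i\max_{j\in\mathcal{N}_i}|e_j-e_i|=o(r_N^2)$. Do not attribute to \cite{zhang2026IFAC} a regime that Assumption~\ref{as:uniform_isotropic_graph} rules out.
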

\subsection{Global Directed Graph Interaction}\label{sec:global_graph}

In parallel with the local geometric diffusion, the stochastic global graph introduces non-local interactions that connect distant regions of the same state space $\mathcal{M}$. Let all random variables and processes governing this global graph construction be defined on a common probability space $(\Omega_z, \mathcal{F}_z, \mathbb{P}_z)$. The sparse connection probability of any given node to the global network is characterized by a continuous kernel $g$ parameterized by its spatial label. 

Let $\{\nu_\alpha\}_{\alpha \in \mathcal{M}}$ be a family of probability measures on the manifold $\mathcal{M}$ such that $\nu_\alpha \in \mathcal{P}(\mathcal{M})$.

\begin{assumption}
    Assume the map $\alpha \mapsto \nu_\alpha$ is measurable with respect to the Borel $\sigma$-algebra $\mathcal{B}(\mathcal{M})$. For almost every $\alpha \in \mathcal{M}$ with respect to the volume measure, the measure $\nu_\alpha$ is absolutely continuous with respect to $\diff V_g$.
\end{assumption}

\begin{definition}
    The kernel density function associated with $\{\nu_\alpha\}_{\alpha \in \mathcal{M}}$ is defined for almost every $\alpha,\beta \in \mathcal{M}$ by
    \begin{equation}
K(\alpha, \beta) \triangleq \frac{\diff \nu_\alpha}{\diff V_g}(\beta).
\end{equation}
\end{definition}

Define the integral operator $\mathbf{G}: L^2(\mathcal{M}) \to L^2(\mathcal{M})$ acting on any test function $f \in L^2(\mathcal{M})$ at $\alpha \in \mathcal{M}$ by
\begin{equation}
    (\mathbf{G}f)(\alpha) \triangleq \int_{\mathcal{M}} f(\beta) \nu_\alpha(\diff \beta) = \int_{\mathcal{M}} f(\beta) K(\alpha,\beta)\diff V_g(\beta).
\end{equation}

\begin{assumption}
\label{asm:kernel_lipschitz}
Assume there exists a constant $C_g > 0$ such that for almost every $\beta \in \mathcal{M}$, the map $\alpha \mapsto K(\alpha, \beta)$ belongs to $H^2(\mathcal{M})$ and satisfies the uniform norm bound
\begin{equation}
\sup_{\beta \in \mathcal{M}} \|K(\cdot, \beta)\|_{H^2(\mathcal{M})} \le C_g.
\end{equation}
\end{assumption}
From Assumption \ref{asm:kernel_lipschitz}, one can obtain that the operator $\mathbf{G}$ is bounded.

Given the embedded graph sequence $\{(\mathcal{G}^N, \varphi^{N})\}_{{N} \ge 1}$ and the manifold partition $\{A_k\}_{k=1}^N$, the continuous operator is discretized to form the finite-dimensional interaction probabilities.

\begin{definition}\label{def:canonical_discretization}
For any nodes $q, k \in V^N$ and let $\alpha_q^N \in A_q$ be the center point of $A_q$, the matrix $G^{[N]}$ is defined component-wise by
\begin{equation}
    G^{[N]}_{qk} \triangleq \int_{A_k} K(\alpha_q^N, \beta) \diff V_g(\beta).
\end{equation}
\end{definition}

Since $\nu_{\alpha_q^N}$ is a probability measure on $\mathcal{M}$ and the sequence of partitions consists of disjoint measurable sets covering the entire manifold, the sum of probabilities over all target cells equals one. Consequently, the sequence of matrices $G^{[{N}]} \in [0,1]^{{N} \times {N}}$ is row-stochastic.

Given a deterministic degree vector $\mathbf{d}^{[{N}]} = [d_1, d_2, \ldots, d_{N}]^\top$ with $d_q \in \mathbb{{N}}^+$, the random neighbor set is constructed for each node $q \in V^N$. Let $\{L_i^{[{N}],q}\}_{i=1}^{d_q}$ be i.i.d. random variables taking values in $V^N$ according to $\mathbb{P}_z(L_i^{[{N}],q} = k) = G^{[{N}]}_{qk}$. The random global neighbor set of node $q$ is
\begin{equation}
    \mathcal{{N}}_q^{(1)} \triangleq \{L_1^{[{N}],q}, L_2^{[{N}],q}, \ldots, L_{d_q}^{[{N}],q}\}.
\end{equation}

\begin{assumption}\label{asm:general_degree}
    Given two variables $d_{min},d_{max}$ satisfying $d_{\max}/d_{\min} \le c_0$ and $d_{\min} \ge c_1 \log N$ for some constants $c_0>0,c_1\ge 1$. Assume that the out-degree sequence satisfies $1 \le d_{\min} \le d_q \le d_{\max}$ for all nodes $q \in V^N$.
\end{assumption}

Two asymptotic regimes are considered for the out-degree sequence.

The first regime is the sparse network, where the degree sequence scales logarithmically with the network size, i.e., there exist constants $c_2 \ge c_1 > 0$ such that $c_1 \log N \le d_{min} \le c_2 \log N$. The total number of edges scales as $\mathcal{O}(N \log N)$, yielding an asymptotic edge density of $\mathcal{O}(\frac{\log N}{N})$.

The second regime is the dense network, where the out-degree sequence scales linearly with the network size, satisfying $d_{min} = \Theta(N)$.

\section{LQG Hybrid Graph Mean Field Games}\label{sec:LQGHGMFG}

To maintain clarity throughout the analysis on the manifold, the following notational conventions are adopted:
\begin{itemize}
    \item \textbf{Continuum Objects}: Variables in boldface (e.g., $\mathbf{x}, \mathbf{m}, \mathbf{G}$) represent infinite dimensional objects defined on the continuum manifold $\mathcal{M}$.
    
    \item \textbf{Averaged States}: The overbar notation ($\bar{\cdot}$) is reserved for cluster-wise averaged states. For instance, $\bar{x}_t^{[N],q}$ denotes the empirical average state of the finite-node cluster $q$.
\end{itemize}
\subsection{Model Setup}

Consider a hybrid network comprising core-interaction and global-interaction structures where each node represents a cluster of homogeneous agents. Different agents $\mathcal{A}_i,\mathcal{A}_j$ of the same cluster share the same dynamics parameters. Let $\mathcal{C}_q, q \in \{1,\ldots,N\}$ denote the set of agents in cluster $q$, with total population $L=\sum_{q=1}^N{|\mathcal{C}_q|}$. For any agent $\mathcal{A}_i \in \mathcal{C}_q$, denote by $x_t^i, u_t^i \in \mathbb{R}$ its state and control input at time instant $t \in [0,T]$, respectively. While the focus is primarily on one-dimensional state spaces for analytical tractability, the framework naturally extends to multi-dimensional cases.

 To model sparse, random interactions across the network, for a node $\alpha_q \in \mathcal{M}$, assume that it connects to $d_q$ external neighbors. These neighbors are sampled independently and identically from a probability measure $\nu_{\alpha_q}$ on $\mathcal{M}$.
The empirical mean field from these peripheral neighbors is defined by
\begin{equation}
z_t^{[N], q} \triangleq \frac{1}{d_q} \sum_{j=1}^{d_q} \bar{x}_t^{[N], L_j^{[N], q}}= (M^{[N]} \bar{x}_t^{[N]})_q
\end{equation}
where $|\mathcal{N}_q^{(1)}| = d_q$, $\bar x_t^{[N],q}\triangleq\frac{1}{|\mathcal{C}_q|}\sum_{i=1}^{|\mathcal{C}_q|} x_t^i$ denotes the average state of cluster $\mathcal{C}_q$, and $\bar{x}_t^{[N]} \triangleq [\bar{x}_t^{[N],1}, \dots, \bar{x}_t^{[N],N}]^\top$.

\begin{remark}\label{rem:lln_limitation}
The convergence of the empirical mean field $z_t^{[N],q}$ to a deterministic continuum limit cannot be established by a direct application of the Law of Large Numbers. The state variables $\bar{x}_t^{[N],l}$ within the random neighbor set are not independent and identically distributed; their dynamics are continuously coupled through the realization of the random graph topology and the local discrete Laplacian. 
\end{remark}

Based on the geometric graph connectivity established in Section \ref{sec:random_graph}, the local interaction is governed by the discrete graph Laplacian. Denote the deterministic initial state of $\mathcal{A}_i$ by $x^i_0$. The dynamics of any agent $\mathcal{A}_i\in \mathcal{C}_q$ are governed by
\begin{equation}
\label{eq:individual_dynamics}
 \diff x_t^i = (a x_t^i + bu_t^i + cz_t^{[N],q}+\gamma (\mathbb{T}_N \bar{x}_t^{[N]})_q)\diff t+\sigma \diff w_t^i, \quad |x^i_0|<\infty
\end{equation}
where $a, b, c\in \mathbb{R},\gamma>0$ are constants and $ \sigma \in \mathbb{R}$. The initial condition is assumed to be independent of the standard i.i.d. Wiener processes $\{w_t^i\}_{i=1,\ldots,N}$. 

The individual cost function of agent $\mathcal{A}_i$ is formulated as
\begin{equation}\label{eq:cost_finite}
 J^{i}=\mathbb{E}\left [\int _{0}^{T}\left(q_0\left(x_t^{i}-\phi_t^{[N],q}\right)^2+r(u_t^{i})^2\right)\diff t  +q_T(x_T^{i}-\phi_T^{[N],q})^2\right]
\end{equation}
where $q_0 \ge 0,r >0$, $\phi_t^{[N],q} \triangleq Hz_t^{[N],q}+\eta$ for any $t \in [0,T]$, and $q_T\ge0,H \in \mathbb{R},\eta \in \mathbb{R}$ are known constants.

\begin{definition}
 A control tuple $(u^1,...,u^L)$ is said to be a Nash equilibrium if any unilateral deviation from $u^i$ to any other $\hat u^i$ yields a higher cost, that is
 \begin{equation}
J^i(u^i,u^{-i}) \le J^i(\hat u^i,u^{-i}), \forall i=1,2,...,L
\end{equation}
 where $u^{-i} \triangleq(u^1,...,u^{i-1},u^{i+1},...,u^L)$.
\end{definition}

Consider the limiting case where the local population of each cluster approaches infinity, i.e., $|\mathcal{C}_q| \rightarrow \infty$ for all $q \in \{1,2,\ldots,N\}$. Let $\mu^{[N],q}_t$ denote the empirical measure of the states in cluster $\mathcal{C}_q$, then the average state of cluster $\mathcal{C}_q$ becomes
\begin{equation}
\bar{x}_t^{[N],q}=\lim _{|\mathcal{C}_q| \rightarrow \infty}\frac{1}{|\mathcal{C}_q|}\sum_{i=1}^{|\mathcal{C}_q|} {x}_t^i=\int_\mathbb{R} x\mu_t^{[N],q}(\diff x)
\end{equation}

Let $\mathcal{A}_\alpha \in \mathcal{C}_q$ denote a generic individual agent in the infinite-population cluster $q$. Its state and control are denoted by $x_t^\alpha$ and $u_t^\alpha$. Conditional on a fixed realization of the random graph sampling $\omega \in \Omega_z$, the states $\bar{x}_t^{[N],q}$ and the associated mean-field $z_t^{[N],q}$ become deterministic. The dynamics of this generic agent $\alpha$ are governed by
\begin{equation}
    \diff x_t^\alpha = \left( a x_t^\alpha + b u_t^\alpha + c z_t^{[N],q} + \gamma (\mathbb{T}_N \bar{x}_t^{[N]})_q \right) \diff t + \sigma \diff w_t^\alpha, \quad |x^\alpha_0|<\infty
\end{equation}
where $w_t^\alpha$ is a standard Wiener process. The corresponding individual cost function is
\begin{equation}
    J^\alpha = \mathbb{E} \left[ \int_0^T \left( q_0 (x_t^\alpha - \phi_t^{[N],q})^2 + r (u_t^\alpha)^2 \right) \diff t + q_T (x_T^\alpha - \phi_T^{[N],q})^2 \right].
\end{equation}

Let ${z}_t^{[N]} \triangleq [z_t^{[N],1}, \dots, z_t^{[N],N}]^\top$. Let the continuum initial state be a sufficiently smooth function $\bar{\mathbf{x}}_0 \in H^2(\mathcal{M})$ and assume that $\bar{x}^{[N],q}_0=\bar{\mathbf{x}}_0(\alpha)$ for the center point $\alpha\in A_q$.

To establish the mean-field limit, we first characterize the optimal decentralized control for the finite-dimensional system given a fixed graph realization.

\begin{lemma}
Given a fixed realization of the random graph topology $\omega \in \Omega_z$. Suppose there exists a pair of processes $(\bar{x}_t^{[N]}, {\theta}_t^{[N]})$ satisfying the following finite-dimensional Forward-Backward ODE system
\begin{equation}
\left\{
\begin{aligned}
    \frac{\diff \bar{x}^{[N]}_t}{\diff t} &= a \bar{x}^{[N]}_t - \frac{b^2}{r}{\theta}^{[N]}_t + c M^{[N]}(\omega)\bar{x}^{[N]}_t + \gamma \mathbb{T}_N \bar{x}^{[N]}_t, \\
    -\frac{\diff {\theta}^{[N]}_t}{\diff t} &= a {\theta}^{[N]}_t + q_0 \bar{x}^{[N]}_t - q_0 {\phi}_t^{[N]}, \\
    {\theta}_T^{[N]} &= q_T \bar{x}_T^{[N]} - q_T {\phi}_T^{[N]},
\end{aligned}
\right.
\end{equation}
where $\Pi_t$ solves the following Riccati equation
\begin{equation}\label{eq:riccati}
    -\dot{\Pi}_t = 2a\Pi_t - \frac{b^2}{r}\Pi_t^2 + q_0,\qquad \Pi_T = q_T,
\end{equation}
and ${\phi}_t^{[N]} \triangleq H {z}_t^{[N]} + \eta$. Then the optimal decentralized control law for the generic agent $\mathcal{A}_\alpha \in \mathcal{C}_q$ is given by
\begin{equation}
    u_t^\alpha = -\frac{b}{r} \left( \Pi_t (x_t^\alpha - \bar{x}_t^{[N],q}) + \theta_t^{[N],q} \right).
\end{equation}
\end{lemma}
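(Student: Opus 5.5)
Fix $\omega \in \Omega_z$ and the pair $(\bar{x}_t^{[N]}, \theta_t^{[N]})$ from the statement. With the graph realization fixed, the terms $z_t^{[N],q} = (M^{[N]}(\omega)\bar{x}_t^{[N]})_q$, $(\mathbb{T}_N \bar{x}_t^{[N]})_q$ and $\phi_t^{[N],q} = H z_t^{[N],q} + \eta$ are deterministic functions of time. The problem of the generic agent $\mathcal{A}_\alpha \in \mathcal{C}_q$ is then a scalar LQ tracking problem,
\[
\diff x_t^\alpha = (a x_t^\alpha + b u_t^\alpha + f_t)\diff t + \sigma \diff w_t^\alpha,
\]
with known forcing $f_t \triangleq c z_t^{[N],q} + \gamma(\mathbb{T}_N\bar{x}_t^{[N]})_q$ and known target $\phi_t^{[N],q}$. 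A single agent in an infinite cluster does not affect $\bar{x}^{[N]}$, so the Nash best response reduces to solving this one-agent control problem.

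I would solve it with the standard completion-of-squares argument. The Riccati equation \eqref{eq:riccati} is scalar with $r>0$, $q_0,q_T\ge 0$, so it has a unique nonnegative bounded solution $\Pi_t$ on $[0,T]$ (comparison with the linear equation gives the bound). I would then introduce an offset $s_t$ solving the linear backward ODE
\[
-\dot{s}_t = \Big(a - \tfrac{b^2}{r}\Pi_t\Big)s_t + \Pi_t f_t - q_0 \phi_t^{[N],q}, \qquad s_T = -q_T\phi_T^{[N],q}.
\]
Applying Itô's formula to $\Pi_t (x_t^\alpha)^2 + 2 s_t x_t^\alpha$ and adding the running cost, I complete the square in $u$. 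This gives
\[
J^\alpha = \text{const} + \mathbb{E}\int_0^T r\Big(u_t^\alpha + \tfrac{b}{r}(\Pi_t x_t^\alpha + s_t)\Big)^2 \diff t,
\]
where the constant does not depend on $u$. Hence the unique optimal control is $u_t^\alpha = -\tfrac{b}{r}(\Pi_t x_t^\alpha + s_t)$ among square-integrable adapted controls.

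The key step is to identify $\Pi_t x_t^\alpha + s_t$ with $\Pi_t(x_t^\alpha - \bar{x}_t^{[N],q}) + \theta_t^{[N],q}$, that is, to show $s_t = \theta_t^{[N],q} - \Pi_t \bar{x}_t^{[N],q}$. I would verify this by direct differentiation. Define $\tilde{s}_t \triangleq \theta_t^{[N],q} - \Pi_t\bar{x}_t^{[N],q}$ and compute $-\dot{\tilde{s}}_t$ using the $q$-th row of the forward equation (whose drift is $a\bar{x}^{q} - \tfrac{b^2}{r}\theta^q + f_t$), the backward equation, and \eqref{eq:riccati}. After cancellation the $\bar{x}$-coefficient combines into $-\tfrac{b^2}{r}\Pi_t\theta^q + \tfrac{b^2}{r}\Pi_t^2\bar{x}^q$, which equals $-\tfrac{b^2}{r}\Pi_t\tilde{s}_t$. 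The terminal condition also matches: $\tilde{s}_T = q_T\bar{x}_T - q_T\phi_T - q_T\bar{x}_T = s_T$. Uniqueness for linear ODEs then gives $\tilde{s} = s$.

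A consistency check is that averaging the closed-loop agent dynamics over the cluster, with the noise averaging out as $|\mathcal{C}_q|\to\infty$, reproduces exactly the forward equation of the system. This confirms the fixed point.

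I expect the main obstacle to be this algebraic matching. The backward equation as stated is $-\dot\theta = a\theta + q_0\bar{x} - q_0\phi$, with no $\tfrac{b^2}{r}$ feedback term on $\theta$. So I must check carefully that all cross terms cancel, in particular that the $a$ and $a\Pi$ contributions and the $\gamma\mathbb{T}_N$ forcing enter both sides identically. If they do not cancel exactly, I would instead derive the adjoint directly via the stochastic maximum principle, setting $p_t = \Pi_t(x_t^\alpha - \bar{x}_t^{q}) + \theta^q_t$, and reconcile the result with the stated system.
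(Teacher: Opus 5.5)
Your proposal is correct and is the verification argument the paper invokes (the paper gives no derivation and only cites the graphon LQG mean field game literature): you complete the square with $\Pi_t$ and the offset $s_t$, then check $s_t=\theta_t^{[N],q}-\Pi_t\bar{x}_t^{[N],q}$ using the $q$-th row of the forward equation, the backward equation and the Riccati equation. The cancellation you were worried about does go through exactly, since the $a$-terms combine to $a\tilde{s}_t$, the $\frac{b^2}{r}$-terms to $-\frac{b^2}{r}\Pi_t\tilde{s}_t$, the $q_0$-terms to $-q_0\phi_t^{[N],q}$, and the forcing $\Pi_t f_t$ (including its $\gamma\mathbb{T}_N$ part) appears identically on both sides, so no fallback to the maximum principle is needed.
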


\begin{proof}
The proof relies on the verification argument for linear-quadratic mean field games. For detailed derivations on similar network models, see \cite{gao2021lqg}.
\end{proof}

Under this optimal control policy, taking the expectation over the idiosyncratic noise, the closed-loop dynamics of the cluster average $\bar{x}_t^{[N],q}$ are deterministic and governed by
\begin{equation}\label{eq:cluster_dyn_finite_N}
    \frac{\diff {\bar{x}}^{[N],q}_t}{\diff t} = a \bar{x}^{[N],q}_t - \frac{b^2}{r} \theta_t^{[N],q} + c z^{[N],q}_t + \gamma (\mathbb{T}_N \bar{x}_t^{[N]})_q.
\end{equation}

Recall that $\{\mathcal{P}_N\}_{N\ge 1}$ is a sequence of partitions of the manifold $\mathcal{M}$ into $N$ disjoint measurable sets $\{A_1, \dots, A_N\}$. 

For any $q=1,\cdots,N$, let $\alpha\in A_q$ be any point of $A_q$. The discretized random sampling operator $\Phi_N^\omega \in \mathcal{L}(L^2(\mathcal{M}))$ is defined as a piecewise constant operator based on local cell averages
\begin{equation} \label{eq:def_Phi_omega_N}
    [\Phi_N^\omega f](\alpha) \triangleq \sum_{q=1}^N \mathbf{1}_{A_q}(\alpha) \frac{1}{d_q} \sum_{i \in \mathcal{N}_q^{(1)}} \frac{1}{\mathrm{Vol}(A_i)} \int_{A_i} f(\beta) \diff V_g(\beta).
\end{equation}
\begin{lemma} \label{lem:L2_norm_bound_operator}
    Let Assumption \ref{asm:general_degree} hold. For any confidence level $\delta \in (0,1)$, there exists a constant $C_{\max}(\delta) > 0$ independent of $N$ such that
    \begin{equation}
\|\Phi_N^\omega\|_{\mathcal{L}(L^2)}  \le C_{\max}(\delta)
\end{equation}
    with probability at least $1-\delta/2$.
\end{lemma}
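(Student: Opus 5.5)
We reduce the operator norm to a weighted in-degree count and then control that count with a union-bounded Chernoff inequality. First note that $\Phi_N^\omega$ factors as $\Phi_N^\omega = \mathcal{I}_N M^{[N]} \mathcal{P}_N$. Here $\mathcal{P}_N f = (f_{A_1},\dots,f_{A_N})^\top$ is the cell-average map. The map $\mathcal{I}_N v = \sum_q v_q \mathbf{1}_{A_q}$ is piecewise-constant interpolation. The matrix $M^{[N]}_{qk} = d_q^{-1}\,\#\{i\le d_q : L_i^{[N],q}=k\}$ is the random row-stochastic sampling matrix. Since $\mathrm{Vol}(A_q)=1/N$, we equip $\mathbb{R}^N$ with the weighted norm $\|v\|_N^2 = N^{-1}\sum_q v_q^2$. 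Then $\mathcal{I}_N$ is an isometry, and by Jensen $\|\mathcal{P}_N f\|_N \le \|f\|_{L^2}$. Hence $\|\Phi_N^\omega\|_{\mathcal{L}(L^2)} \le \|M^{[N]}\|_{2\to 2}$, the Euclidean spectral norm.

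Next we bound $\|M^{[N]}\|_{2\to2} \le \sqrt{\|M^{[N]}\|_{1}\|M^{[N]}\|_{\infty}}$ via Schur's test. Row sums are exactly $1$, so $\|M^{[N]}\|_\infty = 1$. It therefore suffices to bound the maximal column sum
\[
S_k \triangleq \sum_{q=1}^N M^{[N]}_{qk} = \sum_{q=1}^N \frac{1}{d_q}\sum_{i=1}^{d_q} \mathbf{1}\{L_i^{[N],q}=k\}
\]
uniformly in $k$ by a constant. Its mean is $\mathbb{E}_z S_k = \sum_q G^{[N]}_{qk} = \sum_q \int_{A_k} K(\alpha_q^N,\beta)\,\diff V_g(\beta) \le N\cdot \frac1N \sup_{\alpha,\beta}K(\alpha,\beta) \le C_K$. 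The bound $\sup K<\infty$ comes from Assumption \ref{asm:kernel_lipschitz} through the Sobolev embedding $H^2(\mathcal{M})\hookrightarrow C^0(\mathcal{M})$. This embedding holds for $p\le 3$. For general $p$ we simply assume $K$ bounded, which the abstract's description of $K$ as a continuous kernel suggests. We have $K\ge 0$ since it is a density.

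The main step is concentration of $S_k$, which is a sum of independent (over $q,i$) bounded variables $X_{q,i} = d_q^{-1}\mathbf{1}\{L_i^{[N],q}=k\} \in [0, 1/d_{\min}]$. We apply the multiplicative Chernoff/Bernstein bound to $d_{\min} S_k$, a sum of $[0,1]$-valued independent variables with mean at most $d_{\min}C_K$. This gives, for $t \ge 2eC_K$,
\[
\mathbb{P}_z(S_k \ge t) \le 2^{-t d_{\min}} \le N^{-c_1 t \log 2},
\]
using $d_{\min}\ge c_1\log N$ from Assumption \ref{asm:general_degree}. A union bound over $k\in V^N$ gives failure probability at most $N^{1-c_1 t\log 2}$. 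Choosing $t = t(\delta)$ large enough that $N^{1-c_1 t\log 2}\le \delta/2$ for all $N\ge 2$ suffices. Concretely, take $t \ge \max\{2eC_K,\ (1+\log_2(2/\delta))/c_1\}$, using $N\ge2$. Small $N$ is handled trivially since $S_k\le N$.

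This yields $\max_k S_k \le t(\delta)$ with probability at least $1-\delta/2$. Consequently $\|\Phi_N^\omega\|_{\mathcal{L}(L^2)} \le C_{\max}(\delta) \triangleq \sqrt{t(\delta)}$. The obstacle is exactly this uniform column-sum control. The logarithmic degree is precisely what makes the union bound over $N$ columns affordable. With bounded degree the maximal in-degree would grow like $\log N/\log\log N$, and the lemma would fail.
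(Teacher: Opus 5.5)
Your proof is correct. Its skeleton matches the paper's: the factorization through cell averages, Schur's test $\|M^{[N]}\|_2\le\sqrt{\|M^{[N]}\|_1\|M^{[N]}\|_\infty}$ with unit row sums, and a union bound over the $N$ columns. The real difference is which random variable you concentrate.

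The paper discards the weights. It bounds the column sum by the raw in-degree count $K_i=\sum_q\sum_j \mathbf{1}\{L_j^{[N],q}=i\}$ divided by $d_{\min}$, and applies the additive Bernstein inequality to $K_i$, whose mean satisfies $\mu_i\le d_{\max}\sup K$. It then needs the ratio hypothesis $d_{\max}/d_{\min}\le c_0$ to make $\mu_i/d_{\min}$ a constant. In exchange it gets the explicit form $C_{\max}(\delta)^2=c_0\mu_{\max}+\tfrac23 B(\delta)+\sqrt{2c_0\mu_{\max}B(\delta)}$.

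You keep the weights $1/d_q$, so $\mathbb{E}_z S_k=\sum_q G^{[N]}_{qk}\le C_K$ regardless of the degree profile. Rescaling by $d_{\min}$ puts the summands in $[0,1]$, so the multiplicative Chernoff tail $2^{-t d_{\min}}$ uses only $d_{\min}\ge c_1\log N$. Your argument therefore proves the lemma without the bounded-ratio part of Assumption \ref{asm:general_degree}, at the cost of a cruder threshold-type constant (the floor $2eC_K$). You are also more careful than the paper about where boundedness of $K$ comes from; the paper simply sets $\mu_{\max}$ to the sup of the kernel.

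One small slip: since your $N^{-c_1 t\log 2}$ indicates $\log$ is natural, the requirement $2^{1-c_1 t\ln 2}\le\delta/2$ at $N=2$ gives $t\ge(1+\log_2(2/\delta))/(c_1\ln 2)$, not $(1+\log_2(2/\delta))/c_1$. This only changes the constant.
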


\begin{proof}
    For any test function $f \in L^2(\mathcal{M})$, define its local cell average vector $\mathbf{f} \in \mathbb{R}^N$ with components $\mathbf{f}_k = \frac{1}{\mathrm{Vol}(A_k)} \int_{A_k} f(\beta) \diff V_g(\beta)$. Applying Jensen's inequality guarantees that
    \begin{equation}
\|\mathbf{f}\|_2^2 = \sum_{k=1}^N \mathbf{f}_k^2 \le \sum_{k=1}^N N \int_{A_k} f(\beta)^2 \diff V_g(\beta) = N \|f\|_{L^2(\mathcal{M})}^2.
\end{equation}
    By the definition of $\Phi_N^\omega$, its $L^2$-norm is determined by $M^{[N]}$
    \begin{equation}
\|\Phi_N^\omega f\|_{L^2(\mathcal{M})}^2 = \sum_{q=1}^N \int_{A_q} \left( \sum_{k=1}^N M^{[N]}_{qk} \mathbf{f}_k \right)^2 \diff V_g(\alpha) = \frac{1}{N} \|M^{[N]} \mathbf{f}\|_2^2.
\end{equation}
    Consequently, 
    \begin{equation}
\|\Phi_N^\omega\|_{\mathcal{L}(L^2)} \le \|M^{[N]}\|_2\le \sqrt{\|M^{[N]}\|_1 \|M^{[N]}\|_\infty}
\end{equation}
where $\|M^{[N]}\|_\infty$ is the maximum absolute row sum and $\|M^{[N]}\|_1$ is the column sum.
    Because $M^{[N]}$ is a row-stochastic matrix by construction, $\|M^{[N]}\|_\infty = 1$. It remains to bound $\|M^{[N]}\|_1 = \max_{1 \le i \le N} \mathcal{C}_i$, where the column sum $\mathcal{C}_i$ for cell $i$ is formulated by
    \begin{equation}
\mathcal{C}_i = \sum_{q=1}^N M^{[N]}_{qi} = \sum_{q=1}^N \frac{1}{d_q} \sum_{j=1}^{d_q} \mathbf{1}_{\{L_j^{[N],q} = i\}}.
\end{equation}
    Let $K_i = \sum_{q=1}^N \sum_{j=1}^{d_q} \mathbf{1}_{\{L_j^{[N],q} = i\}}$ for each cell $i$. Then $\mathcal{C}_i \le K_i / d_{\min}$ and
    \begin{equation}
\mu_i = \mathbb{E}_z[K_i] = \sum_{q=1}^N d_q G^{[N]}_{qi} \le d_{\max} \mu_{\max}
\end{equation}
    where $\mu_{\max} = \|g\|_\infty $.
    Applying the Bernstein's inequality to $K_i$, for any $t > 0$
    \begin{equation}
\mathbb{P}_z ( K_i-\mu_i \ge t ) \le \exp\left( - \frac{t^2}{2\mu_i + 2t/3} \right).
\end{equation}
    To ensure the global failure probability over all $N$ columns does not exceed $\delta/2$, we equate the right-hand side to $\delta / (2N)$ and apply the inequality $\sqrt{A+B} \le \sqrt{A} + \sqrt{B}$:
    \begin{equation}
t \le \frac{2}{3}\ln(2N/\delta) + \sqrt{2\mu_i \ln(2N/\delta)}.
\end{equation}
    Thus, with probability at least $1 - \delta/(2N)$, the individual degree satisfies $K_i \le \mu_i + t \le d_{\max} \mu_{\max}+t$. Therefore, with probability at least $1-\delta/2$,
    \begin{equation}
\max_{1 \le i \le N} K_i \le d_{\max} \mu_{\max} + \frac{2}{3}\ln(2N/\delta) + \sqrt{2 d_{\max} \mu_{\max} \ln(2N/\delta)}.
\end{equation}
    Recall that $\max_i \mathcal{C}_i \le (\max_i K_i) / d_{\min}$. Dividing the above inequality by $d_{\min}$,
    \begin{equation}
\max_{1 \le i \le N} \mathcal{C}_i \le \frac{d_{\max}}{d_{\min}} \mu_{\max} + \frac{2}{3}\frac{\ln(2N/\delta)}{d_{\min}} + \sqrt{2 \frac{d_{\max}}{d_{\min}} \mu_{\max} \frac{\ln(2N/\delta)}{d_{\min}}}.
\end{equation}
    Under Assumption \ref{asm:general_degree}, 
    \begin{equation}
\frac{\ln(2N/\delta)}{d_{\min}} = \frac{\ln N + \ln(2/\delta)}{d_{\min}} \le \frac{1}{c_1} + \frac{\ln(2/\delta)}{c_1 \ln 2} \triangleq B(\delta).
\end{equation}
    Applying the substitution $d_{\max}/d_{\min} \le c_0$ and $B(\delta)$ to the components of $\max_i \mathcal{C}_i$, the uniform bound independent of $N$ is established
    \begin{equation}
\max_{1 \le i \le N} \mathcal{C}_i \le c_0 \mu_{\max} + \frac{2}{3}B(\delta) + \sqrt{2 c_0 \mu_{\max} B(\delta)}.
\end{equation}
    Thus, $\|\Phi_N^\omega\|_{\mathcal{L}(L^2)} \le \sqrt{\max_i \mathcal{C}_i} \triangleq C_{\max}(\delta)$ which holds with probability at least $1-\delta/2$, completing the proof.
\end{proof}

\subsection{Continuum Limit Dynamics}
To establish the continuum limit, the finite-dimensional empirical processes must be mapped to the function space on the manifold. Specifically, the continuum mean field state $\mathbf{m}_t \in L^2(\mathcal{M})$ and the continuum adjoint state $\Theta_t \in L^2(\mathcal{M})$ correspond to the spatial limits of the expected discrete cluster average state $\bar{x}_t^{[N]}$ w.r.t. the neighbor sampling process in Section \ref{sec:global_graph} and the discrete adjoint vector $\theta_t^{[N]}$, respectively.

To analyze the well-posedness of the continuum dynamics, define the unbounded linear operator $\mathcal{A}_0$ on $L^2(\mathcal{M})$ by
\begin{equation}
    \mathcal{A}_0 \triangleq a\mathbb{I} + \frac{\gamma}{2} \Delta_g.
\end{equation}
The domain is $D(\mathcal{A}_0) = H^2(\mathcal{M})$ and $\mathcal{A}_0$ generates a analytic $C_0$-semigroup $S_0(t)$ on $L^2(\mathcal{M})$ \cite{pazy2012semigroups}. Since $\frac{\gamma}{2}\Delta_g$ generates a contraction semigroup, $S_0(t)$ satisfies the bound $\|S_0(t)\|_{\mathcal{L}(H^s)} \le e^{at}$ for any $t \ge 0$ and any real $s \ge 0$. 

Further define the unbounded operator 
\begin{equation}
\mathcal{A} \triangleq a\mathbb{I} + c\mathbf{G} + \frac{\gamma}{2} \Delta_g
\end{equation}
acting as a bounded perturbation of the strongly elliptic Laplace-Beltrami operator and therefore generating an analytic $C_0$-semigroup $S(t)$ on $L^2(\mathcal{M})$.

\begin{theorem} \label{thm:fbpde_local}
Let $\mathcal{K} \triangleq q_0(\mathbb{I} - H\mathbf{G})$ and $\mathcal{K}_T \triangleq q_T(\mathbb{I} - H\mathbf{G})$. Then there exists a maximal time horizon $T^* > 0$ such that for any finite horizon $T \in (0, T^*)$, the coupled Forward-Backward PDE system
\begin{equation}
\label{eq:FBPDE_asymmetric}
\left\{
\begin{aligned}
 \dot{\mathbf{m}}_t &= \mathcal{A} \mathbf{m}_t - \frac{b^2}{r} \Theta_t,  \\
 -\dot{\Theta}_t &= a \Theta_t + \mathcal{K}\mathbf{m}_t - q_0\eta\mathbf{1}_\M,  \\
 \mathbf{m}_0 &= \bar{\mathbf{x}}_0,\\
 \Theta_T &= \mathcal{K}_T \mathbf{m}_T - q_T\eta\mathbf{1}_\M,
\end{aligned}
\right.
\end{equation}
admits a unique solution pair 
\begin{equation}
\mathbf{m}\in C^1([0,T]; L^2(\mathcal{M})) \cap C([0,T]; H^2(\mathcal{M})), \qquad \Theta \in C^1([0,T]; L^2(\mathcal{M})).
\end{equation}
\end{theorem}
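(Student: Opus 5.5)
We prove the theorem by rewriting the forward-backward system as a fixed point on the initial value of the adjoint, or equivalently on the forward trajectory, and contracting for small $T$. The backward equation has the bounded generator $a\mathbb{I}$, so given any $\mathbf{m}\in C([0,T];L^2(\mathcal{M}))$ it is solved explicitly by variation of constants:
\begin{equation*}
\Theta_t = e^{a(T-t)}\big(\mathcal{K}_T\mathbf{m}_T - q_T\eta\mathbf{1}_\M\big) + \int_t^T e^{a(s-t)}\big(\mathcal{K}\mathbf{m}_s - q_0\eta\mathbf{1}_\M\big)\diff s .
\end{equation*}
Since $\mathbf{G}$ is bounded on $L^2(\mathcal{M})$ (by Assumption \ref{asm:kernel_lipschitz}), $\mathcal{K}$ and $\mathcal{K}_T$ are bounded. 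The map $\mathbf{m}\mapsto\Theta$ is therefore affine and Lipschitz from $C([0,T];L^2)$ into $C^1([0,T];L^2)$, with Lipschitz constant at most $e^{|a|T}\big(q_T+q_0T\big)(1+|H|\|\mathbf{G}\|)$.

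Next we feed $\Theta$ into the forward equation. Because $\mathcal{A}$ is a bounded perturbation of $\mathcal{A}_0$, it generates the analytic semigroup $S(t)$ with $\|S(t)\|_{\mathcal{L}(L^2)}\le e^{\omega t}$, where $\omega=|a|+|c|\|\mathbf{G}\|$. We define
\begin{equation*}
\Gamma(\mathbf{m})_t \triangleq S(t)\bar{\mathbf{x}}_0 - \frac{b^2}{r}\int_0^t S(t-s)\Theta_s\,\diff s ,
\end{equation*}
the mild solution map. Composing the two Lipschitz bounds gives
\begin{equation*}
\|\Gamma(\mathbf{m})-\Gamma(\mathbf{m}')\|_{C([0,T];L^2)} \le \frac{b^2}{r}\,T e^{\omega T} e^{|a|T}\big(q_T+q_0T\big)\big(1+|H|\|\mathbf{G}\|\big)\,\|\mathbf{m}-\mathbf{m}'\|_{C([0,T];L^2)} .
\end{equation*}
We choose $T^*$ as the supremum of the $T$ for which this constant is $<1$. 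This is positive because the constant vanishes as $T\to0$ and increases in $T$. The Banach fixed point theorem then gives a unique mild solution $(\mathbf{m},\Theta)$ for every $T<T^*$. Uniqueness of classical solutions follows because any classical solution is mild, hence a fixed point of $\Gamma$.

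The remaining step, and the main obstacle, is regularity: showing $\mathbf{m}\in C^1([0,T];L^2)\cap C([0,T];H^2)$. For this we use analyticity of $S(t)$ together with $\bar{\mathbf{x}}_0\in H^2=D(\mathcal{A})$ (since $D(\mathcal{A})=D(\mathcal{A}_0)$). First, $S(\cdot)\bar{\mathbf{x}}_0\in C^1([0,T];L^2)\cap C([0,T];D(\mathcal{A}))$. Second, the forcing $\Theta$ is in $C^1([0,T];L^2)$, because $\mathbf{m}$ is continuous and the explicit formula for $\Theta$ differentiates in $t$. Pazy's regularity theorem for inhomogeneous problems with $f\in C^1([0,T];X)$ (or Lipschitz $f$ with an analytic generator) then shows that the convolution term is a classical solution, lying in $C^1([0,T];L^2)\cap C([0,T];D(\mathcal{A}))$. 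We identify $D(\mathcal{A})$ with $H^2(\mathcal{M})$ using the graph-norm equivalence from elliptic regularity of $\Delta_g$ on the compact manifold: $\|f\|_{H^2}\simeq\|f\|_{L^2}+\|\Delta_gf\|_{L^2}$ via the spectral definition, combined with boundedness of $\mathbf{G}$. Finally, with $\mathbf{m}$ and $\Theta$ now classical, $\Theta\in C^1([0,T];L^2)$ satisfies the backward equation and its terminal condition pointwise.

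One subtlety we should address is whether the restriction $T<T^*$ is genuinely needed. The coupling may be non-monotone (for instance $H\mathbf{G}$ can make $\mathcal{K}$ indefinite), so no Riccati decoupling argument is available in general. We therefore state the result only locally, as the theorem does.
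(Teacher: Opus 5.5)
Your proposal is correct and follows essentially the same route as the paper: solve the backward equation explicitly by variation of constants, compose with the mild-solution map of the forward equation, contract in $C([0,T];L^2(\mathcal{M}))$ for small $T$ via Banach's fixed point theorem, and recover $\mathbf{m}\in C^1([0,T];L^2(\mathcal{M}))\cap C([0,T];H^2(\mathcal{M}))$ from $\Theta\in C^1([0,T];L^2(\mathcal{M}))$ and $\bar{\mathbf{x}}_0\in H^2(\mathcal{M})$. Your explicit bound $\|S(t)\|_{\mathcal{L}(L^2)}\le e^{(|a|+|c|\|\mathbf{G}\|)t}$, the identification $D(\mathcal{A})=H^2(\mathcal{M})$, and the remark that classical solutions are mild (so uniqueness holds in the stated class) are small clarifications that the paper leaves implicit.
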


\begin{proof}
Let $\mathbf{X}=C([0,T]; L^2(\mathcal{M}))$. For a given state trajectory $\mathbf{m} \in \mathbf{X}$, the unique solution $\Theta \in C([0,T]; L^2(\mathcal{M}))$ is explicitly given by
\begin{equation}
\Theta_t = e^{a(T-t)} (\mathcal{K}_T \mathbf{m}_T - q_T\eta \mathbf{1}) + \int_t^T e^{a(s-t)} (\mathcal{K} \mathbf{m}_s - q_0\eta \mathbf{1}) \diff s.
\end{equation}
This representation defines a continuous mapping $\Gamma_1 \colon \mathbf{X} \to \mathbf{X}$. For any pair $\mathbf{m}^{(1)}, \mathbf{m}^{(2)} \in \mathbf{X}$, the difference $\delta\Theta = \Gamma_1(\mathbf{m}^{(1)}) - \Gamma_1(\mathbf{m}^{(2)})$ satisfies
\begin{equation}
\|\delta\Theta_t\|_{L^2} \le e^{|a|T} \|\mathcal{K}_T\|_{\mathcal{L}(L^2)} \|\delta\mathbf{m}_T\|_{L^2} + \int_t^T e^{|a|(s-t)} \|\mathcal{K}\|_{\mathcal{L}(L^2)} \|\delta\mathbf{m}_s\|_{L^2} \diff s.
\end{equation}
Taking the supremum over $t \in [0,T]$ yields the Lipschitz bound for the backward mapping
\begin{equation}
\|\delta\Theta\|_{\mathbf{X}} \le e^{|a|T} \left( \|\mathcal{K}_T\|_{\mathcal{L}(L^2)} + T \|\mathcal{K}\|_{\mathcal{L}(L^2)} \right) \|\delta\mathbf{m}\|_{\mathbf{X}} \triangleq L_\Theta(T) \|\delta\mathbf{m}\|_{\mathbf{X}}.
\end{equation}

For the forward process, the unbounded operator $\mathcal{A}$ generates an analytic $C_0$-semigroup $S(t)$ on $L^2(\mathcal{M})$. There exist constants $M \ge 1$ and $\omega_0 \in \mathbb{R}$ such that $\|S(t)\|_{\mathcal{L}(L^2)} \le M e^{\omega_0 t}$ for all $t \ge 0$. Let $\omega_0^+ \triangleq \max\{\omega_0, 0\}$.

Given an adjoint trajectory $\Theta \in \mathbf{X}$, the mild solution of the forward equation defines a mapping $\Gamma_2 \colon \mathbf{X} \to \mathbf{X}$ governed by
\begin{equation}
\mathbf{m}_t = S(t)\bar{\mathbf{x}}_0 - \int_0^t S(t-\tau) \frac{b^2}{r} \Theta_\tau \diff \tau.
\end{equation}
For any $\Theta^{(1)}, \Theta^{(2)} \in \mathbf{X}$, the difference $\delta\mathbf{m} = \Gamma_2(\Theta^{(1)}) - \Gamma_2(\Theta^{(2)})$ is bounded by
\begin{equation}
\|\delta\mathbf{m}_t\|_{L^2} \le \int_0^t M e^{\omega_0(t-\tau)} \frac{b^2}{r} \|\delta\Theta_\tau\|_{L^2} \diff \tau.
\end{equation}
Taking the supremum over $[0,T]$ establishes the Lipschitz continuity of the forward mapping
\begin{equation}
\|\delta\mathbf{m}\|_{\mathbf{X}} \le M e^{\omega_0^+ T} \frac{b^2}{r} T \|\delta\Theta\|_{\mathbf{X}} \triangleq L_m(T) \|\delta\Theta\|_{\mathbf{X}}.
\end{equation}

Consider the composite operator $\mathcal{F} \triangleq \Gamma_2 \circ \Gamma_1 \colon \mathbf{X} \to \mathbf{X}$. Combining the Lipschitz estimates yields a total bound
\begin{equation}
\|\mathcal{F}(\mathbf{m}^{(1)}) - \mathcal{F}(\mathbf{m}^{(2)})\|_{\mathbf{X}} \le L(T) \|\mathbf{m}^{(1)} - \mathbf{m}^{(2)}\|_{\mathbf{X}},
\end{equation}
where the total contraction constant is defined by
\begin{equation}
L(T) \triangleq M e^{(\omega_0^+ + |a|)T} \frac{b^2}{r} T \left( \|\mathcal{K}_T\|_{\mathcal{L}(L^2)} + T \|\mathcal{K}\|_{\mathcal{L}(L^2)} \right).
\end{equation}

Since $L(T)$ is monotonically increasing w.r.t. $T$ and satisfies $\lim_{T \to 0^+} L(T) = 0$, there exists a unique horizon $T^* > 0$ such that $L(T^*) = 1$. For any prescribed horizon $T \in (0, T^*)$, the condition $L(T) < 1$ holds. The Banach Fixed-Point Theorem guarantees the existence of a unique fixed point $\mathbf{m} \in \mathbf{X}$. The adjoint state is uniquely determined by $\Theta = \Gamma_1(\mathbf{m}) \in \mathbf{X}$.

Furthermore, note that $\mathbf{m} \in C([0,T]; L^2(\mathcal{M}))$ implies $\Theta \in C^1([0,T]; L^2(\mathcal{M}))$. Consequently, the term $-\frac{b^2}{r} \Theta_t \in C^1([0,T]; L^2(\mathcal{M}))$. Since the initial condition satisfies $\bar{\mathbf{x}}_0  \in H^2(\mathcal{M})$, the maximal regularity property of analytic semigroups guarantees that $\mathbf{m} \in C^1([0,T]; L^2(\mathcal{M})) \cap C([0,T]; H^2(\mathcal{M}))$. This completes the proof.
\end{proof}

\section{Error Analysis in Sparse Limits}\label{sec:error_analysis}

This section quantifies the deviation of the finite-agent network dynamics from the deterministic mean field limit. 

For any $t \in [0,T]$, define the piece-wise constant functions $\bar{\mathbf{x}}_t^N \in L^2(\mathcal{M})$ and $\bar{\boldsymbol{\theta}}_t^N \in L^2(\mathcal{M})$ by 
\begin{equation} \label{eq:piecewise_constant_embedding}
    \bar{\mathbf{x}}_t^N(\alpha) \triangleq \sum_{q=1}^N \mathbf{1}_{A_q}(\alpha) \bar{x}_t^{[N],q}, \qquad \bar{\boldsymbol{\theta}}_t^N(\alpha) \triangleq \sum_{q=1}^N \mathbf{1}_{A_q}(\alpha) \theta_t^{[N],q}, \quad \forall \alpha \in \mathcal{M}.
\end{equation}
By this construction, the spatial limits of these embedded functions correspond to the continuum mean field state $\mathbf{m}_t$ and the continuum adjoint state $\Theta_t$.

Throughout this section, we fix a constant $s$ such that $s \in (p/2, \min\{2, p/2 + 1\})$.

\begin{assumption}\label{asm:manifold_dim}
 Assume that the dimension of $\mathcal{M}$ is $p \le 3$.
\end{assumption}

Define the discrete operator $\mathcal{A}^N_0 \triangleq a\mathbb{I} + \gamma \mathbf{L}_N$ on $L^2(\mathcal{M})$. Let $S_N(t) \triangleq e^{\mathcal{A}^N_0 t}$ denote the analytic $C_0$-semigroup generated by $\mathcal{A}^N_0$.

Let $P_N: C^0(\mathcal{M}) \to L^2(\mathcal{M})$ be defined by $(P_N \phi)(\alpha) \triangleq \sum_{k=1}^N \mathbf{1}_{A_k}(\alpha) \phi(\alpha_k^N)$. 

\begin{lemma} \label{lem:nodal_interpolation_error}
For any $\phi \in H^2(\mathcal{M})$, the interpolation error satisfies
\begin{equation}
\|(I - P_N) \phi\|_{L^2} \le C_{cell} \delta_N^{1/2} \|\phi\|_{H^2},
\end{equation}
where $C_{cell} > 0$ is a constant independent of $N$.
\end{lemma}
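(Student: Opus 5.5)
Since $p\le 3$ (Assumption \ref{asm:manifold_dim}) and $2>p/2$, the Sobolev embedding gives $H^2(\mathcal{M})\hookrightarrow C^{0,\mu}(\mathcal{M})$ for some Hölder exponent $\mu\in(0,1]$. This makes $P_N\phi$ well defined, and it yields
\begin{equation}
|\phi(\alpha)-\phi(\beta)|\le C_{H}\, d_g(\alpha,\beta)^{\mu}\,\|\phi\|_{H^2}, \qquad \alpha,\beta\in\mathcal{M}.
\end{equation}
The dimensional ranges allow $\mu = 1/2$ for every $p\le 3$. For $p=3$ one has $H^2\hookrightarrow C^{0,1/2}$ exactly. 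For $p\le 2$ one has $H^2\hookrightarrow C^{0,\mu}$ for every $\mu<1$; since $\delta_N\to 0$, a larger exponent can be weakened to $1/2$ because $\delta_N^{\mu}\le \mathrm{diam}(\mathcal{M})^{\mu-1/2}\delta_N^{1/2}$. I will therefore work with $\mu=1/2$ uniformly.

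The proof is then a cellwise pointwise estimate. On each cell $A_q$ we have $(I-P_N)\phi(\alpha)=\phi(\alpha)-\phi(\alpha_q^N)$, and both points lie in $A_q$. Hence $d_g(\alpha,\alpha_q^N)\le \mathrm{diam}(A_q)\le\delta_N$, which gives
\begin{equation}
|(I-P_N)\phi(\alpha)|\le C_H\,\delta_N^{1/2}\,\|\phi\|_{H^2}, \qquad \alpha\in A_q.
\end{equation}
Squaring, integrating over $A_q$, and summing over $q$ with $\sum_q \mathrm{Vol}(A_q)=\mathrm{Vol}(\mathcal{M})=1$ gives
\begin{equation}
\|(I-P_N)\phi\|_{L^2}^2\le C_H^2\,\delta_N\,\|\phi\|_{H^2}^2 .
\end{equation}
The claim follows with $C_{cell}=C_H$. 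This constant depends only on $(\mathcal{M},g)$ through the embedding constant, so it is independent of $N$.

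The only step that needs care is the Sobolev–Hölder embedding on a compact manifold with the spectral definition of $H^s$. I expect it to be the main obstacle only in the sense of justification. I would cite the standard equivalence of the spectral $H^2$ norm with the coordinate-chart $H^2$ norm (elliptic regularity for $-\Delta_g$). I would then apply the Euclidean Morrey/Sobolev embedding in finitely many charts, and use a uniform comparison between geodesic and chart distances for nearby points. Distant points, with $d_g$ bounded below by the chart Lebesgue number, are handled by the sup bound $\|\phi\|_\infty\le C\|\phi\|_{H^2}$.

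A sharper bound, of order $\delta_N$ or better, would follow from a Poincaré-type argument, but the stated $\delta_N^{1/2}$ rate needs only this crude Hölder estimate.
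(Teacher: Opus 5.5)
Your proposal is correct and follows the paper's own proof: the embedding $H^2(\mathcal{M})\hookrightarrow C^{0,1/2}(\mathcal{M})$ for $p\le 3$, then the cellwise bound $|\phi(\alpha)-\phi(\alpha_q^N)|\le C\,\delta_N^{1/2}\|\phi\|_{H^2}$, then squaring, integrating over each $A_q$, and summing with $\sum_q \mathrm{Vol}(A_q)=1$. Your additional remarks make explicit points the paper leaves implicit: that $P_N\phi$ is well defined, that a larger Hölder exponent reduces to $1/2$ when $p\le 2$, and that the spectral and chart Sobolev norms are equivalent.
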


\begin{proof}
See Appendix \ref{pf:nodal_interpolation_error}.
\end{proof}

\begin{lemma} \label{lem:semigroup_difference_spectral}
Let Assumption \ref{as:uniform_isotropic_graph} and Assumption \ref{asm:manifold_dim} hold. For a fixed finite terminal time $T>0$ and $f \in L^\infty(0,T; H^2(\mathcal{M}))$, it holds that
\begin{equation}
\sup_{t \in [0,T]} \sup_{\tau \in [0,t]} \|S_N(t-\tau) P_N f(\tau) - P_N S_0(t-\tau) f(\tau)\|_{L^2} \le \mathcal{O}\big(r_N^{2/5} + \delta_N^{1/2}\big).
\end{equation}
\end{lemma}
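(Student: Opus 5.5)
We will prove this with a Duhamel (variation-of-constants) comparison between the two semigroups. Fix $\tau$ and put $\psi \triangleq f(\tau) \in H^2(\mathcal{M})$ and $\sigma \triangleq t-\tau \in [0,T]$. Let $u(s) \triangleq S_0(s)\psi$; since $\psi\in D(\mathcal{A}_0)$, $u\in C([0,T];H^2)\cap C^1([0,T];L^2)$. Because $\mathbf{L}_N$ is bounded for each $N$, differentiating $s\mapsto S_N(\sigma-s)P_N u(s)$ is legitimate. Using $S_N(s)=e^{as}e^{\gamma \mathbf{L}_N s}$, $S_0(s)=e^{as}e^{\frac{\gamma}{2}\Delta_g s}$ and $P_N\mathcal{A}_0 u - \mathcal{A}_0^N P_N u = \gamma(P_N\tfrac12\Delta_g u - \mathbf{L}_N P_N u)$, we obtain
\begin{equation*}
S_N(\sigma)P_N\psi - P_N S_0(\sigma)\psi = \gamma\int_0^{\sigma} S_N(\sigma-s)\big(\mathbf{L}_N P_N - P_N \tfrac{1}{2}\Delta_g\big) S_0(s)\psi \,\diff s .
\end{equation*}
So the lemma reduces to two estimates. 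The first is a uniform-in-$N$ bound $\|S_N(s)\|_{\mathcal{L}(L^2)}\le e^{|a|s}$. This holds because $\mathbf{L}_N$ is a negative-semidefinite Laplacian acting on cell averages: $\mathbf{L}_N$ annihilates the orthogonal complement of piecewise constants, and on piecewise constants it is a symmetric graph Laplacian with $w_{ij}$ constant. The factor $1/|\mathcal{N}_i|$ breaks symmetry, so we either use the near-uniformity of $|\mathcal{N}_i|$ implied by Assumption \ref{as:uniform_isotropic_graph}, or argue dissipativity in a weighted inner product equivalent to $L^2$ uniformly in $N$. The second is a quantitative consistency estimate for $\mathbf{L}_N P_N - P_N\tfrac12\Delta_g$ applied to $S_0(s)\psi$.

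For consistency, we make Lemma \ref{prop:laplace_convergence} quantitative. For $\phi\in C^3$ we Taylor expand $\phi(\exp_{\alpha_i}(v_{ij}))$ to third order. Assumption \ref{as:uniform_isotropic_graph} controls the first- and second-moment errors by $C_u r_N$, and the cubic remainder contributes $r_N^{-2}r_N^3=r_N$. Replacing nodal values by the cell averages $\psi_{A_j}$ costs $r_N^{-2}\delta_N\|\nabla\phi\|$-type terms, which are handled using $r_N\sim\delta_N$ together with symmetry of averages. This gives
\[\|(\mathbf{L}_N P_N-P_N\tfrac12\Delta_g)\phi\|_{L^2}\le C r_N\|\phi\|_{C^3}.\]
The heat flow $S_0(s)\psi$, however, has only $H^2$ regularity uniformly, and $\|S_0(s)\psi\|_{H^{2+k}}\lesssim s^{-k/2}\|\psi\|_{H^2}$ by analyticity. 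By Sobolev embedding with $p\le 3$ (Assumption \ref{asm:manifold_dim}), $C^3$ is controlled by $H^{s'}$ for $s'>3+p/2$, so we split the time integral at $s=\epsilon$. On $[\epsilon,\sigma]$ the integrand is at most $C r_N \epsilon^{-\kappa}\|\psi\|_{H^2}$, with $\kappa$ determined by $s'-2$. On $[0,\epsilon]$ we use crude bounds. Since $\|\mathbf{L}_N\|\sim r_N^{-2}$ is too large, we instead work with the $H^2$-to-$L^2$ form of the consistency error, which is of order one: the second-order Taylor expansion applied with an $H^2$ function, after averaging over cells, yields $O(1)\|\phi\|_{H^2}$. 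This piece therefore contributes $O(\epsilon)$. Optimizing $\epsilon^{1-\kappa}r_N$ against $\epsilon$ produces a fractional power of $r_N$, and with the appropriate interpolation (using $H^{s}$, $s>p/2$, to bound pointwise values in the remainder) we aim for the exponent $2/5$. The $\delta_N^{1/2}$ term comes from the $P_N$ interpolation mismatch via Lemma \ref{lem:nodal_interpolation_error}, applied to $S_0(\sigma)\psi$, which remains bounded in $H^2$.

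Finally, all constants depend on $\psi$ only through $\|f\|_{L^\infty(0,T;H^2)}$ and on $\sigma$ only through $e^{|a|T}$, so taking suprema over $\tau\le t\le T$ gives the claim. The main obstacle is the consistency step. We must get a uniform-in-$N$ bound on $S_N$ despite the asymmetric normalization $1/|\mathcal{N}_i|$, and we must balance the singular smoothing estimate against the short-time $H^2$ estimate to obtain the precise rate $r_N^{2/5}$. If the exponent bookkeeping does not give exactly $2/5$, the argument still yields some power $r_N^{\theta}$, and we would tune the splitting (e.g. $\epsilon=r_N^{2/5}$) to match the stated bound.
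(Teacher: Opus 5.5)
Your Duhamel identity and the consistency bound $O(r_N)\|\phi\|_{C^3}$ are the same ingredients the paper uses. The step that fails is the short-time estimate. The operator $\mathbf{L}_NP_N-P_N\tfrac12\Delta_g$ is not $O(1)$ from $H^2$ to $L^2$; it is not even defined there. $P_N$ samples point values, while $\Delta_g\phi\in L^2$ only, and $\mathbf{L}_NP_N\phi$ is a pointwise second difference quotient at scale $r_N$. What does hold is $\|(\mathbf{L}_NP_N-P_N\tfrac12\Delta_g)\phi\|_{L^\infty}\le C\|\phi\|_{C^2}\le C\|\phi\|_{H^{2+\rho}}$ for $\rho>p/2$. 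By parabolic smoothing the integrand on $[0,\epsilon]$ is therefore of size $s^{-\rho/2}\|\psi\|_{H^2}$, which is integrable only because $p<4$. That piece contributes $\epsilon^{1-\rho/2}$, not $\epsilon$.

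This changes your exponent bookkeeping materially. The fallback $\epsilon=r_N^{2/5}$ gives only $r_N^{(2-\rho)/5}$, about $r_N^{1/10}$ when $p=3$. A fixed index such as $H^5$ on $[\epsilon,\sigma]$ gives at best $r_N^{(2-\rho)/(3-\rho)}$, which is below $r_N^{1/3}$ for $p=3$. The argument does close if you do two things:
\begin{itemize}
\item use the sharp embedding $H^{3+\rho}\hookrightarrow C^3$ on $[\epsilon,\sigma]$, so the integrand is $\lesssim r_N s^{-(1+\rho)/2}\|\psi\|_{H^2}$;
\item split at the parabolic scale $\epsilon=r_N^2$.
\end{itemize}
With $\rho\in(\max\{1,p/2\},8/5]$, which is nonempty because $p\le 3$, both pieces are then $O(r_N^{2-\rho})\le O(r_N^{2/5})$. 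You should also justify Duhamel near $s=0$: integrate over $[\eta,\sigma]$ and let $\eta\to 0$, using $S_0(\eta)\psi\to\psi$ in $H^2\hookrightarrow C^0$.

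Two further points about your formula. It is an exact identity, so no $\delta_N^{1/2}$ term arises; your attribution of $\delta_N^{1/2}$ to a $P_N$-mismatch at $S_0(\sigma)\psi$ does not correspond to any term you wrote. Likewise no nodal-versus-average correction is needed, since $(P_N\phi)_{A_j}=\phi(\alpha_j^N)$ exactly. That is fortunate: an $r_N^{-2}\delta_N$ error would be of order $r_N^{-1}$, and symmetry would not remove it.

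For comparison, the paper splits in frequency rather than time, writing $\psi=\mathcal{T}_\lambda\psi+\psi^\perp$. The high-frequency part never enters the consistency estimate. It is bounded using only $L^2$-boundedness of $S_N$ and $S_0$ plus Lemma \ref{lem:nodal_interpolation_error}, which is needed because $P_N$ is not $L^2$-bounded; this is where $(1+\lambda)^{-1}+\delta_N^{1/2}$ comes from. The low-frequency part is smooth uniformly in time, with $\|S_0(s)\psi_\lambda\|_{H^5}\le e^{|a|T}(1+\lambda)^{3/2}\|\psi\|_{H^2}$, so Duhamel gives $r_N(1+\lambda)^{3/2}$. Balancing at $1+\lambda=r_N^{-2/5}$ yields exactly $r_N^{2/5}$. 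That route avoids singular time integrals but discards parabolic smoothing. Your repaired time split exploits smoothing and gives the stronger rate $r_N^{2-\rho}$ with no $\delta_N^{1/2}$ term.

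Your concern about a uniform-in-$N$ bound on $S_N$ is well placed, since the paper simply asserts $\|S_N(t)\|\le Me^{\omega_0 t}$. Your weighted-inner-product argument supplies it, with prefactor $\sqrt{\max_i|\mathcal{N}_i|/\min_i|\mathcal{N}_i|}$. That prefactor is bounded because $r_N=O(\delta_N)$, $\delta_N=O(N^{-1/p})$ and $\mathrm{Vol}(A_i)=1/N$ force $1\le|\mathcal{N}_i|\le C$.
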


\begin{proof}
See Appendix \ref{pf:semigroup_difference_spectral}.
\end{proof}

\begin{theorem} \label{thm:error_bound_global_l2}
    Let the tracking error be defined as $\mathbf{e}_t^N = \bar{\mathbf{x}}_t^N - \mathbf{m}_t$. Let Assumptions \ref{as:uniform_isotropic_graph}, \ref{asm:kernel_lipschitz}, \ref{asm:general_degree}, and \ref{asm:manifold_dim} hold.  Then there exists a uniform time horizon $T^{**} > 0$, independent of $N$, such that for any $T \in (0, T^{**}]$ and any confidence level $\delta \in (0,1)$, the total tracking error satisfies the following bounds with probability at least $1-\delta$:
    \begin{enumerate}
        \item \textbf{Logarithmic Sparse Regime} If $d_{\min} = \Theta(\log N)$, the tracking error satisfies
        \begin{equation}
\sup_{t \in [0,T]} \|\mathbf{e}_t^N\|_{L^2} \le \mathcal{O}\left( \sqrt{\frac{\ln(2/\delta)}{\log N}} \right).
\end{equation}
        \item \textbf{Dense Regime} If $d_{\min} = \Theta(N)$, the tracking error satisfies
        \begin{equation}
\sup_{t \in [0,T]} \|\mathbf{e}_t^N\|_{L^2} \le \mathcal{O}\left( N^{-\gamma(p,s)} \right).
\end{equation}
        where $\gamma(p,s) \triangleq \min\left\{ \frac{2}{5p}, \frac{s}{p} - \frac{1}{2} \right\}$.
    \end{enumerate}
\end{theorem}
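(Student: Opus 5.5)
We compare the mild formulations of the embedded finite system and of the limit FBPDE, and close the estimate with a contraction argument on a short horizon that does not depend on $N$. Embedding the finite FBODE through \eqref{eq:piecewise_constant_embedding}, the forward equation becomes
\[
\bar{\mathbf{x}}_t^N=S_N(t)P_N\bar{\mathbf{x}}_0+\int_0^t S_N(t-\tau)\Big(c\,\Phi_N^\omega\bar{\mathbf{x}}_\tau^N-\tfrac{b^2}{r}\bar{\boldsymbol{\theta}}_\tau^N\Big)\diff\tau ,
\]
since $\mathbf{L}_N$ reproduces $\mathbb{T}_N$ on piecewise constant functions and $\Phi_N^\omega$ reproduces $M^{[N]}$. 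The backward equation has an explicit representation, with terminal datum $q_T(\mathbb I-H\Phi_N^\omega)\bar{\mathbf{x}}_T^N-q_T\eta\mathbf 1_\M$. For the limit, we write $\mathbf m_t=S_0(t)\bar{\mathbf x}_0+\int_0^tS_0(t-\tau)(c\mathbf G\mathbf m_\tau-\frac{b^2}{r}\Theta_\tau)\diff\tau$, treating $c\mathbf G$ as a perturbation of $\mathcal A_0$. This is equivalent to the $S(t)$ formulation in Theorem \ref{thm:fbpde_local}, and the regularity there gives $\mathbf m\in C([0,T];H^2)$.

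Next we split $\mathbf e_t^N=(\bar{\mathbf x}_t^N-P_N\mathbf m_t)+(P_N-I)\mathbf m_t$. Lemma \ref{lem:nodal_interpolation_error} bounds the second term by $\mathcal O(\delta_N^{1/2})$. The first term decomposes into four parts:
\begin{enumerate}
\item a semigroup consistency term $S_N P_N f-P_NS_0f$, with $f\in\{\bar{\mathbf x}_0,\ c\mathbf G\mathbf m_\tau,\ \Theta_\tau\}$, all in $L^\infty(0,T;H^2)$ because Assumption \ref{asm:kernel_lipschitz} makes $\mathbf G$ map $L^2$ into $H^2$; Lemma \ref{lem:semigroup_difference_spectral} bounds this by $\mathcal O(r_N^{2/5}+\delta_N^{1/2})$;
\item a sampling fluctuation term $c\int_0^tS_N(t-\tau)(\Phi_N^\omega P_N-P_N\mathbf G)\mathbf m_\tau\diff\tau$;
\item a feedback term $c\int S_N\Phi_N^\omega(\bar{\mathbf x}^N_\tau-P_N\mathbf m_\tau)\diff\tau$, bounded using Lemma \ref{lem:L2_norm_bound_operator} (probability $1-\delta/2$) together with $\|S_N(t)\|\le e^{|a|t}$, which holds because $\mathbf L_N$ is dissipative;
\item the adjoint mismatch $\frac{b^2}{r}\int S_N(\bar{\boldsymbol\theta}^N-P_N\Theta)$.
\end{enumerate}
The backward error is treated the same way: it is bounded linearly by $\sup_t\|\bar{\mathbf x}^N_t-P_N\mathbf m_t\|$, with constants involving $C_{\max}(\delta)$ and $|H|$, plus the same sampling fluctuation applied with $H\Phi_N^\omega$. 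Combining the forward and backward bounds gives $E\le L'(T)E+R_N$, where $E=\sup_t\|\bar{\mathbf x}^N_t-P_N\mathbf m_t\|$. Here $L'(T)$ has the same structure as $L(T)$ in Theorem \ref{thm:fbpde_local}, but with $C_{\max}(\delta)$ in place of $\|\mathbf G\|$. Choosing $T^{**}$ with $L'(T^{**})<1$ (a Gronwall step absorbs the forward $c$-term) gives $E\le R_N/(1-L'(T))$, uniformly in $N$.

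The main obstacle is the sampling fluctuation term, which is needed with high probability uniformly in $\tau$. We plan to write $\Phi_N^\omega-\mathbb E_z\Phi_N^\omega$ as an average over $q$ and over the $d_q$ independent draws $L_j^{[N],q}$ of rank-one operators $\mathbf 1_{A_q}\otimes\frac{1}{\mathrm{Vol}(A_k)}\mathbf 1_{A_k}$. These are independent, centered random elements of the Hilbert–Schmidt space. Because the whole operator norm of this difference is controlled, no union over $\tau$ is required. A Bernstein/Pinelis inequality in that Hilbert space should then give deviation of order $\sqrt{\ln(2/\delta)/d_{\min}}$ with probability $1-\delta/2$. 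The difficulty is that, taken naively, the Hilbert–Schmidt norms of these summands grow like $N$. We will therefore test against smooth functions: we apply the fluctuation only to $P_N\mathbf m_\tau$ with $\mathbf m_\tau\in H^2$, and use the embedding $H^s\hookrightarrow C^{0}$ for $s>p/2$ (allowed by Assumption \ref{asm:manifold_dim}) to bound the random vectors $\Phi_N^\omega P_N \mathbf m_\tau$ pointwise, by $\|\mathbf m\|_{H^s}$ times bounded variables.

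In the sparse regime the per-cell fluctuations, of order $d_q^{-1/2}$, combine in $L^2$ to $\sqrt{\ln(2/\delta)/\log N}$, which dominates every deterministic term. In the dense regime they are $\mathcal O(N^{-1/2})$ and no longer dominate. The bias $\mathbb E_z\Phi_N^\omega P_N-P_N\mathbf G$ is then a quadrature error of the kernel $G^{[N]}$ against cell averages. Controlling it through the $H^s$ Sobolev interpolation gives $\delta_N^{s-p/2}=N^{-(s/p-1/2)}$, while $r_N^{2/5}=N^{-2/(5p)}$ comes from Lemma \ref{lem:semigroup_difference_spectral}; together these produce the rate $\gamma(p,s)$. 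A union bound on the two events of probability $\delta/2$ completes the argument.
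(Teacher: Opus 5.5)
Your proposal is essentially the paper's proof---the same mild-formulation comparison, Lemmas \ref{lem:nodal_interpolation_error}, \ref{lem:semigroup_difference_spectral} and \ref{lem:L2_norm_bound_operator}, the same Pinelis--Bernstein bound for the centered sampling operator viewed as a sum of independent rank-one Hilbert--Schmidt operators from $H^s$ to $L^2$ (the paper handles your ``main obstacle'' exactly this way, since each Riesz representer has $H^s$-norm at most $2C_S$), and the same short-horizon Gr\"onwall absorption plus union bound. The one difference is that you compare with $P_N\mathbf{m}_t$ rather than $\mathbf{m}_t$, so your bias $\tilde{\mathbf{G}}_NP_N-P_N\mathbf{G}=\mathbf{G}_N(P_N-I)$ takes the place of the paper's $\mathcal{W}_N$ and no analogue of $\mathcal{R}_N$ appears.

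One correction: with the normalization $1/|\mathcal{N}_i|$, $\mathbf{L}_N$ is in general not dissipative in $L^2(\mathcal{M})$ when the $|\mathcal{N}_i|$ vary. It is, however, self-adjoint and nonpositive for the degree-weighted inner product $\sum_i|\mathcal{N}_i|v_iw_i$, which gives $\|S_N(t)\|_{\mathcal{L}(L^2)}\le(\max_i|\mathcal{N}_i|/\min_i|\mathcal{N}_i|)^{1/2}e^{at}$. This is the $N$-uniform bound $Me^{\omega_0t}$ that the paper also uses, and it holds as long as the local degrees remain comparable.
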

\begin{proof}
    Consider the high-probability event $\Omega_0 \subseteq \Omega_z$ defined by 
    \begin{equation}
\Omega_0 = \left\{ \omega \in \Omega_z \;\Big|\; \|\Phi_N^\omega\|_{\mathcal{L}(L^2)} \le C_{\max}(\delta) \right\} \cap \left\{ \omega \in \Omega_z \;\Big|\; \|\mathcal{E}_N\|_{H^s \to L^2} \le \epsilon_{op}(N, \delta/2) \right\}.
\end{equation}
    By Lemma \ref{lem:L2_norm_bound_operator}, $\mathbb{P}_z\left(\|\Phi_N^\omega\|_{\mathcal{L}(L^2)} > C_{\max}(\delta)\right) \le \delta/2$. Concurrently, Proposition \ref{prop:total_operator_error} ensures that $\mathbb{P}_z\left(\|\mathcal{E}_N\|_{H^s \to L^2} > \epsilon_{op}(N, \delta/2)\right) \le \delta/2$. Hence, $\mathbb{P}_z(\Omega_0) \ge 1 - \delta$. We proceed with the analysis conditional on a fixed $\omega \in \Omega_0$.

    Define the error $\boldsymbol{\epsilon}_t^\theta = \bar{\boldsymbol{\theta}}_t^N - \Theta_t$. The evolution of $\mathbf{e}_t^N$ involves $\boldsymbol{\epsilon}_t^\theta$ and is given by 
    \begin{equation}\label{eq:forward_error_global_l2}
    \begin{aligned}
        \mathbf{e}_t^N &= \left( S_N(t)\bar{\mathbf{x}}_0^N - S_0(t)\mathbf{m}_0 \right) + \int_0^t \left[ S_N(t-\tau) \left( c \Phi_N^\omega \bar{\mathbf{x}}_\tau^N - \frac{b^2}{r} \bar{\boldsymbol{\theta}}_\tau^N \right) - S_0(t-\tau) F(\tau) \right] \diff \tau\\
        &=S_N(t)(\bar{\mathbf{x}}_0^N - P_N \mathbf{m}_0) + (S_N(t) P_N - P_N S_0(t))\mathbf{m}_0 + (P_N - I) S_0(t)\mathbf{m}_0 \\
        &\quad + \int_0^t S_N(t-\tau) \left( c \Phi_N^\omega \mathbf{e}_\tau^N - \frac{b^2}{r} \boldsymbol{\epsilon}_\tau^\theta \right) \diff \tau + \int_0^t S_N(t-\tau) c \mathcal{E}_N \mathbf{m}_\tau \diff \tau \\
        &\quad + \int_0^t \big( (S_N(t-\tau) P_N - P_N S_0(t-\tau)) F(\tau) + (P_N - I) S_0(t-\tau) F(\tau) \\
        &\qquad+ S_N(t-\tau)(I - P_N)F(\tau) \big) \diff \tau,
    \end{aligned}
    \end{equation}
    where $F(\tau) = c \mathbf{G} \mathbf{m}_\tau - \frac{b^2}{r} \Theta_\tau$. 

    We first analyze the bound for $\boldsymbol{\epsilon}_\tau^\theta$ whose dynamics are given by
    \begin{align*}
        -\partial_t \boldsymbol{\epsilon}_t^\theta &= a \boldsymbol{\epsilon}_t^\theta + q_0(\mathbb{I} - H \Phi_N^\omega) \mathbf{e}_t^N - q_0 H \mathcal{E}_N \mathbf{m}_t, \\
        \boldsymbol{\epsilon}_T^\theta &= q_T(\mathbb{I} - H \Phi_N^\omega) \mathbf{e}_T^N - q_T H \mathcal{E}_N \mathbf{m}_T.
    \end{align*}
    The solution can be written as
    \begin{equation}\label{eq:epsilon_t}
         \boldsymbol{\epsilon}_t^\theta = e^{a(T-t)} \boldsymbol{\epsilon}_T^\theta + \int_t^T e^{a(s-t)} \left( q_0(\mathbb{I} - H \Phi_N^\omega) \mathbf{e}_s^N - q_0 H \mathcal{E}_N \mathbf{m}_s \right) \diff s.
    \end{equation}
    Taking the $L^2$-norm gives
    \begin{equation}
\begin{aligned}
            \|\boldsymbol{\epsilon}_t^\theta\|_{L^2} &\le e^{|a|(T-t)} \|\boldsymbol{\epsilon}_T^\theta\|_{L^2}  \\
            &\quad +\int_t^T e^{|a|(s-t)} \left( q_0 \|\mathbb{I} - H \Phi_N^\omega\|_{\mathcal{L}(L^2)} \|\mathbf{e}_s^N\|_{L^2} + q_0 |H| \|\mathcal{E}_N \mathbf{m}_s\|_{L^2} \right) \diff s.
       \end{aligned}
\end{equation}
    Since $H^2(\mathcal{M}) \hookrightarrow H^s(\mathcal{M})$ for $s < 2$, we have $\sup_{\tau \in [0,T]} \|\mathbf{m}_\tau\|_{H^s} \le M_{\mathbf{m}} < \infty$. Then $\|\mathcal{E}_N \mathbf{m}_s\|_{L^2} \le \epsilon_{op} \|\mathbf{m}_s\|_{H^s} \le \epsilon_{op} M_{\mathbf{m}}$. 
    
    Define $u(t) = \sup_{\tau \in [0,t]} \|\mathbf{e}_\tau^N\|_{L^2}$ for any $t \in [0,T]$ and let $C_{\Phi, I} = 1 + |H| C_{\max}(\delta) \ge \|\mathbb{I} - H \Phi_N^\omega\|_{\mathcal{L}(L^2)}$, the terminal condition is bounded by
    \begin{equation}
\|\boldsymbol{\epsilon}_T^\theta\|_{L^2} \le q_T C_{\Phi, I} u(T) + q_T |H| M_{\mathbf{m}} \epsilon_{op}.
\end{equation}
    Substituting this terminal bound into \eqref{eq:epsilon_t}, we obtain
    \begin{equation}
\|\boldsymbol{\epsilon}_t^\theta\|_{L^2} \le e^{|a|T} (q_T + q_0 T) C_{\Phi, I} u(T) + e^{|a|T} (q_T + q_0 T) |H| M_{\mathbf{m}} \epsilon_{op} \triangleq C_B u(T) + C_E \epsilon_{op}.
\end{equation}
    
     Next, we bound the last integral in \eqref{eq:forward_error_global_l2}. Assumption \ref{asm:kernel_lipschitz} ensures $\mathbf{m}_\tau, F(\tau) \in H^2(\mathcal{M})$. Lemma \ref{lem:semigroup_difference_spectral} provides 
    \begin{equation}
\sup_{\tau \in [0,T]} \|\big(S_N(t-\tau) P_N - P_N S_0(t-\tau)\big) F(\tau)\|_{L^2} \le \mathcal{O}(r_N^{2/5} + \delta_N^{1/2}).
\end{equation}
    Similarly, $\|(S_N(t) P_N - P_N S_0(t))\mathbf{m}_0\|_{L^2} \le \mathcal{O}(r_N^{2/5} + \delta_N^{1/2})$.
    For the remaining terms involving $(P_N - I)$, Lemma \ref{lem:nodal_interpolation_error} establishes $\|(I - P_N) \phi\|_{L^2} \le C_{cell} \delta_N^{1/2} \|\phi\|_{H^2}$. Therefore, these components are uniformly bounded by $\Delta_{geo} = \mathcal{O}(r_N^{2/5} + \delta_N^{1/2})$. Furthermore, by the assumption in Section 3.2, $\bar{\mathbf{x}}_0^N = P_N \mathbf{m}_0$, hence $S_N(t)(\bar{\mathbf{x}}_0^N - P_N \mathbf{m}_0) = 0$.

     We now substitute $\|\boldsymbol{\epsilon}_t^\theta\|_{L^2}$ and $\Delta_{geo}$ back into \eqref{eq:forward_error_global_l2}. Taking the $L^2$-norm of \eqref{eq:forward_error_global_l2}, applying $\|S_N(t)\|_{\mathcal{L}(L^2)} \le M e^{\omega_0 t}$, and defining $\omega_0^+ = \max\{\omega_0, 0\}$ establishes
    \begin{equation}
\|\mathbf{e}_t^N\|_{L^2} \le \Delta_{geo} + \int_0^t \left[ \bar{K}_1(\delta) \|\mathbf{e}_\tau^N\|_{L^2} + K_2 u(T) + K_3 \epsilon_{op} \right] \diff \tau,
\end{equation}
    where $\bar{K}_1(\delta) = M e^{\omega_0^+ T} |c| C_{\max}(\delta)$, $K_2 = M e^{\omega_0^+ T} \frac{b^2}{r} C_B$, and $K_3 = M e^{\omega_0^+ T} \left( \frac{b^2}{r} C_E + |c| M_{\mathbf{m}} \right)$ independent of $N$. Recall that $r_N = \mathcal{O}(N^{-1/p})$ and $\delta_N = \mathcal{O}(N^{-1/p})$, the combined deterministic error satisfies $\Delta_{geo} \le \mathcal{O}(N^{-\frac{2}{5p}} + N^{-\frac{1}{2p}}) = \mathcal{O}(N^{-\frac{2}{5p}})$. Thus,
    \begin{equation}
\|\mathbf{e}_t^N\|_{L^2} \le \mathcal{O}(N^{-\frac{2}{5p}}) + \int_0^t \left[ \bar{K}_1(\delta) \|\mathbf{e}_\tau^N\|_{L^2} + K_2 u(T) + K_3 \epsilon_{op} \right] \diff \tau.
\end{equation}

    Applying Grönwall's inequality and taking the supremum over $t \in [0,T]$ yields
    \begin{equation}
u(T) \left( 1 - K_2 \frac{e^{\bar{K}_1(\delta) T} - 1}{\bar{K}_1(\delta)} \right) \le \mathcal{O}(N^{-\frac{2}{5p}}) e^{\bar{K}_1(\delta) T} + K_3 \epsilon_{op} \frac{e^{\bar{K}_1(\delta) T} - 1}{\bar{K}_1(\delta)}.
\end{equation}
    Since $\bar{K}_1(\delta)$ is uniform with respect to $N$, the limit $\lim_{T \to 0^+} \frac{e^{\bar{K}_1(\delta) T} - 1}{\bar{K}_1(\delta)} = 0$ guarantees the existence of a uniform horizon $T^{**} > 0$ such that $1 - K_2 \frac{e^{\bar{K}_1(\delta) T} - 1}{\bar{K}_1(\delta)} \ge 1/2$. For any $T \in (0, T^{**}]$,
    \begin{equation}
u(T) \le \mathcal{O}(N^{-\frac{2}{5p}}) + \tilde{C}_{sys}(\delta) \epsilon_{op},
\end{equation}
    where $\tilde{C}_{sys}(\delta) \triangleq 2 K_3 \frac{e^{\bar{K}_1(\delta) T} - 1}{\bar{K}_1(\delta)}$.

    By Proposition \ref{prop:total_operator_error}, we have $\epsilon_{op} \le \mathcal{O}\left( \sqrt{\ln(2/\delta)/d_{\min}} + N^{-\kappa(p,s)} \right)$. 
    
    If $d_{\min} = \Theta(\log N)$, the stochastic sampling term $\mathcal{O}(\sqrt{\ln(2/\delta)}(\log N)^{-1/2})$ is asymptotically dominant over all polynomial decay rates, yielding $\sup_{t \in [0,T]} \|\mathbf{e}_t^N\|_{L^2} \le \mathcal{O}(\sqrt{\ln(2/\delta)}(\log N)^{-1/2})$.
    
    If $d_{\min} = \Theta(N)$, the stochastic sampling term converges at $\mathcal{O}(\sqrt{\ln(2/\delta)}N^{-1/2})$. Hence, the combined tracking error is governed by $\sup_{t \in [0,T]} \|\mathbf{e}_t^N\|_{L^2} \le \mathcal{O}(N^{-\gamma(p,s)})$.
\end{proof}

\section{Numerical Experiments}\label{sec:simulation}

This section {provides Monte Carlo simulations to validate} the finite-population approximations. The state space is a one-dimensional manifold $\mathcal{M} = [0, 1]$ equipped with periodic boundary conditions. The underlying interaction network is generated by a deterministic core graph superimposed with a stochastic global graph.

The population size is $N=400$ and the time horizon is $T=3.0$, discretized with a time step $\Delta t = 10^{-3}$. The dynamics coefficients are $a=-1.2$, $b=1.0$, {$c=1.0$, $\sigma=0.5$}, and $\gamma=0.001$. The cost function parameters are $q_0=0.5$, $r=1.0$, $q_T=0.5$, $H=1.0$, and $\eta=0.0$. The initial condition is $x_0(\alpha) = \sin(4\pi\alpha) + 1.0$.

The non-local interactions are drawn from the probability measure associated with the continuous kernel {$K(\alpha,\beta) = 1.0 + 0.3\cos(2\pi(\alpha-\beta))$}.

Two asymptotic network regimes are compared. In the sparse regime, the nodal degrees are sampled from a {truncated} Poisson distribution with expectation {$\lceil 2\ln N \rceil = 12$}. In the dense regime, the degrees are sampled uniformly from the interval $[280, 380]$.

The continuum limits are calculated by solving the corresponding FBPDE system. The tracking performance is measured by the $L^2$ error {$\|\mathbf{e}_t^N\|_{L^2} = \|\bar{\mathbf{x}}_t^N - \mathbf{m}_t\|_{L^2}$}.

{Figure \ref{fig:error_convergence} plots} the evolution of $\|\mathbf{e}_t^N\|_{L^2}$ for both regimes over 50 simulation trials. {Both networks exhibit similar behaviors with the tracking error decreasing over time. However, the sparse network maintains a higher error magnitude throughout the time horizon.}

To validate the theoretical bounds established in Theorem \ref{thm:error_bound_global_l2}, the stochastic topological error is evaluated by isolating it from the deterministic spatial discretization error $\mathcal{O}(N^{-\gamma(p,s)})$. By fixing the network size $N=400$, the $\mathcal{O}(N^{-\gamma(p,s)})$ term is constrained to a constant baseline. The expected graph degree $d_{avg}$ is then varied across multiple trials to observe the transition from the sparse to the dense regime. As illustrated in Fig. \ref{fig:scaling_law}, the tracking error $\|\mathbf{e}_t^N\|_{L^2}$ is plotted. As $d_{avg}$ increases, the actual error curve asymptotically aligns with the theoretical reference line of slope $-0.5$. This confirmation matches with the operator concentration bound $\mathcal{O}(d_{\min}^{-1/2})$ derived in Proposition \ref{prop:total_operator_error}.

\begin{figure}[htbp]
    \centering
    \begin{subfigure}[b]{0.48\textwidth}
        \centering
        \includegraphics[width=\textwidth]{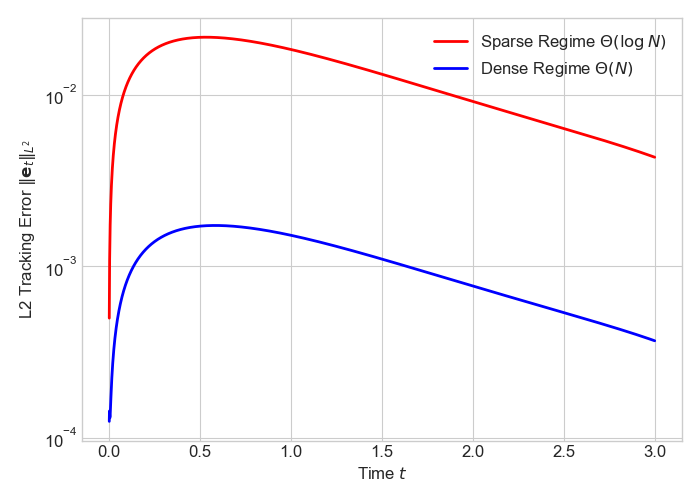}
        \caption{Evolution of $\|\mathbf{e}_t^N\|_{L^2}$.}
        \label{fig:error_convergence}
    \end{subfigure}
    \hfill 
    \begin{subfigure}[b]{0.48\textwidth}
        \centering
        \includegraphics[width=\textwidth]{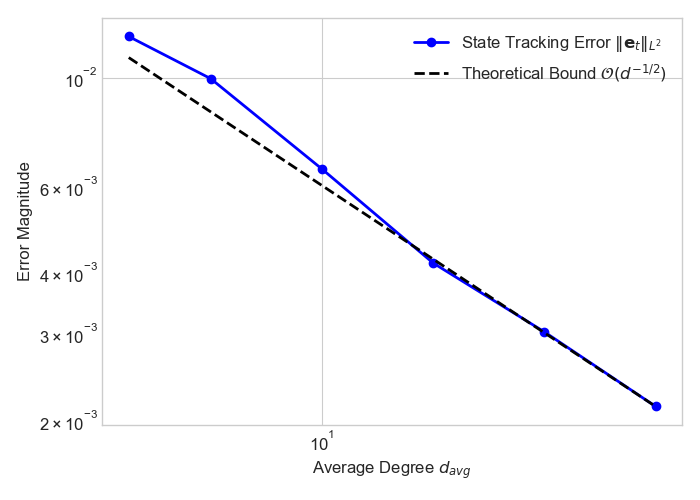}
        \caption{Error versus average degree.}
        \label{fig:scaling_law}
    \end{subfigure}
    
    \caption{(a) Time evolution of the tracking error for both sparse and dense regimes. (b) Scaling law of the time-averaged error with respect to the network degree.}
    \label{fig:numerical_results}
\end{figure}

Fig. \ref{fig:trajectory_shading} records the local mean states at $\alpha \in \{0.125, 0.5, 0.875\}$. The shaded regions indicate the {$\pm 2$ standard deviation} intervals. 

\begin{figure}[htbp]
    \centering
    \includegraphics[width=0.96\textwidth]{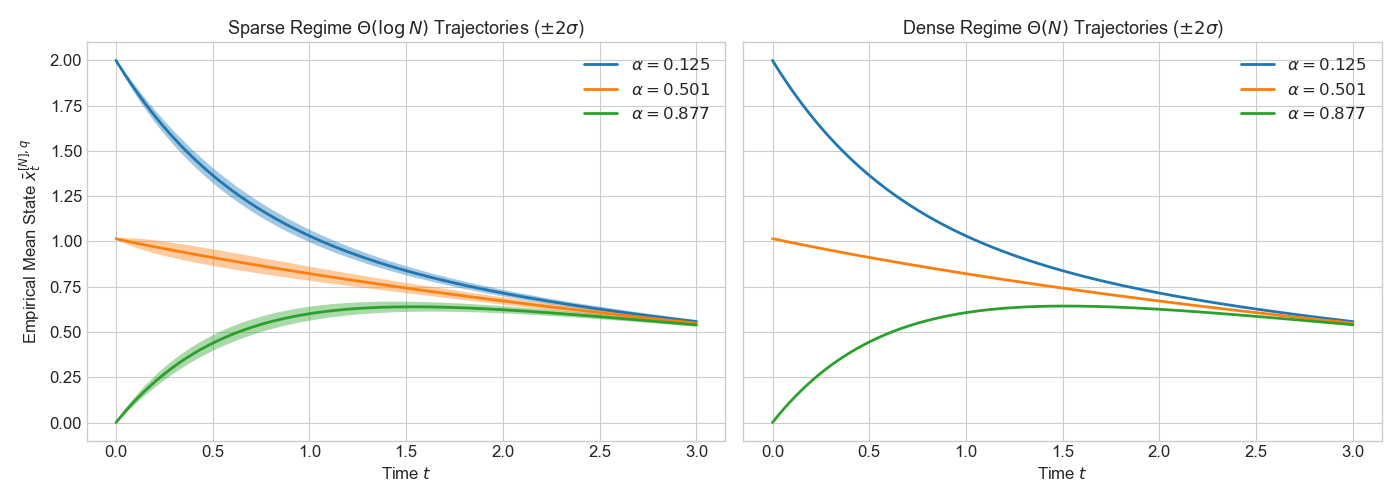}
    \caption{Time evolution of the local mean state for selected coordinates. Shaded regions represent {$\pm 2$ standard deviation} intervals over 50 trials. Left: Sparse network. Right: Dense network.}
    \label{fig:trajectory_shading}
\end{figure}
\section{Conclusion}\label{sec:conclusion}

This paper {investigated} Linear-Quadratic-Gaussian games formulated on a hybrid graph structure. The network topology combines a deterministic local geometric core and a stochastic global periphery. The continuum limit of the game is characterized by a system of Forward-Backward PDEs governing $\mathbf{m}_t$ and $\Theta_t$.

{Non-asymptotic high-probability upper bounds are established for the approximation error $\mathbf{e}_t^N$} between the finite-population {empirical state} $\bar{\mathbf{x}}_t^N$ and {the continuum limit state} $\mathbf{m}_t$. For the sparse regime with out-degrees scaling as $\Theta(\log N)$, the error is bounded by $\mathcal{O}((\log N)^{-1/2})$. For the dense regime scaling as $\Theta(N)$, the error is bounded by $\mathcal{O}(N^{-\gamma(p,s)})$. 

Future research could extend the stochastic sampling mechanism of the global graph to expander graph structures to model interactions in {other complex network topologies}.

\section*{Acknowledgments}
The author would like to acknowledge the assistance of Prof. Peter E. Caines in providing valuable suggestions and insightful comments.

\section*{Declarations}

During the preparation of this manuscript, the authors utilized ChatGPT to assist with language polishing and textual error correction. All mathematical results, computational experiments, and conclusions were verified by the authors. The authors assume responsibility for all content.

\appendix
\section{Operator Error Decomposition}
Define the \textit{piece-wise constant function} $\bar{\mathbf{x}}_t^N$ on $\M$ as $\bar{\mathbf{x}}_t^N(\alpha)=\bar{x}^{[N],q}_t$ for any $\alpha\in A_q$.

Define the discretized integral operator $\mathbf{G}_N$ by
\begin{equation} \label{eq:def_G_N}
 [\mathbf{G}_N f](\alpha) \triangleq \sum_{k=1}^N \mathbf{1}_{A_k}(\alpha) \int_{\mathcal{M}} g(\alpha_k, \beta) f(\beta) \diff V_g(\beta).
\end{equation}

Define the intermediate expectation operator $\tilde{\mathbf{G}}_N: L^2(\mathcal{M}) \to L^2(\mathcal{M})$ by
\begin{equation}
    [\tilde{\mathbf{G}}_N f](\alpha) \triangleq \sum_{q=1}^N \mathbf{1}_{A_q}(\alpha) \sum_{k=1}^N G^{[N]}_{qk} f_{A_k},
\end{equation}
where $f_{A_k} \triangleq \frac{1}{\mathrm{Vol}(A_k)} \int_{A_k} f(\beta) \diff V_g(\beta)$. By definition, $\mathbb{E}_z[\Phi_N^\omega f] = \tilde{\mathbf{G}}_N f$.

\begin{lemma}\label{lem:W_N_weak_convergence}
    For any test function $f \in H^s(\mathcal{M})$ with $s > p/2$, the weak convergence error operator $\mathcal{W}_N \triangleq \tilde{\mathbf{G}}_N - \mathbf{G}_N$ satisfies 
    \begin{equation}
\|\mathcal{W}_N\|_{H^s \to L^2} \le \mathcal{O}(N^{\frac{1}{2} - \frac{s}{p}}).
\end{equation}
\end{lemma}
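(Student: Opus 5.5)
The plan is to write $\mathcal{W}_N$ as an integral against the kernel of the cell-averaging error, and then use a Sobolev--Hölder embedding to get the rate. I read the kernel $g$ in \eqref{eq:def_G_N} as the density $K$, so that $\mathbf{G}_N$ freezes the first argument of $\mathbf{G}$ at the cell centres $\alpha_q^N$.

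\textbf{Step 1 (algebraic rewriting).} Let $\Pi_N f \triangleq \sum_k \mathbf{1}_{A_k} f_{A_k}$ be the cell-average projection. For $\alpha \in A_q$, Definition \ref{def:canonical_discretization} gives
\begin{equation*}
[\tilde{\mathbf{G}}_N f](\alpha) = \sum_{k} \int_{A_k} K(\alpha_q^N,\beta)\,f_{A_k}\,\diff V_g(\beta) = \int_{\mathcal{M}} K(\alpha_q^N,\beta)\,(\Pi_N f)(\beta)\,\diff V_g(\beta).
\end{equation*}
Therefore
\begin{equation*}
[\mathcal{W}_N f](\alpha) = \int_{\mathcal{M}} K(\alpha_q^N,\beta)\,\big((\Pi_N f)(\beta) - f(\beta)\big)\,\diff V_g(\beta).
\end{equation*}
This reduces the problem to bounding the projection error $f - \Pi_N f$, tested against the frozen kernel $K(\alpha_q^N,\cdot)$.

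\textbf{Step 2 (uniform kernel bound).} Assumption \ref{asm:manifold_dim} gives $p \le 3$, so $H^2(\mathcal{M}) \hookrightarrow C^0(\mathcal{M})$ because $2 > p/2$. Combined with Assumption \ref{asm:kernel_lipschitz}, this yields
\begin{equation*}
\sup_{\alpha,\beta}|K(\alpha,\beta)| \le C_{emb} C_g \triangleq \bar K.
\end{equation*}
One subtlety is that the assumption holds only for a.e. $\beta$ while we evaluate at the nodes $\alpha_q^N$. I would handle this by using the continuous representative in $\alpha$; the null set in $\beta$ is harmless inside the integral. With this bound,
\begin{equation*}
|[\mathcal{W}_N f](\alpha)| \le \bar K\,\|f - \Pi_N f\|_{L^1} \le \bar K\,\|f - \Pi_N f\|_{L^2},
\end{equation*}
uniformly in $\alpha$, using $\mathrm{Vol}(\mathcal{M})=1$. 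Integrating over $\mathcal{M}$ then gives $\|\mathcal{W}_N f\|_{L^2} \le \bar K\,\|f - \Pi_N f\|_{L^2}$.

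\textbf{Step 3 (projection error via Hölder embedding).} This is the step that produces the rate. Since $s \in (p/2,\, p/2+1)$, the Sobolev--Morrey embedding on the compact manifold gives $H^s(\mathcal{M}) \hookrightarrow C^{0,\,s-p/2}(\mathcal{M})$, with $[f]_{C^{0,s-p/2}} \le C\|f\|_{H^s}$.

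For $\beta \in A_k$, the average $f_{A_k}$ is a convex combination of values $f(\beta')$ with $d_g(\beta,\beta') \le \delta_N$. Hence
\begin{equation*}
|f(\beta) - f_{A_k}| \le C\,\delta_N^{\,s-p/2}\,\|f\|_{H^s}.
\end{equation*}
This yields $\|f - \Pi_N f\|_{L^2} \le C\,\delta_N^{\,s-p/2}\,\|f\|_{H^s}$. Using $\delta_N = \mathcal{O}(N^{-1/p})$ from Assumption \ref{as:uniform_isotropic_graph}, we have $\delta_N^{\,s-p/2} = \mathcal{O}(N^{1/2 - s/p})$, which is the claimed bound.

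\textbf{Main obstacle.} I expect Step 3 to be the main difficulty: the fractional embedding into Hölder spaces must hold on $\mathcal{M}$ with the spectrally defined $H^s$ norm. I would justify it through the norm equivalence between the spectral $H^s$ norm and the Bessel-potential norm defined via charts, and then apply the Euclidean Morrey--Sobolev embedding in each chart of a finite atlas, using compactness of $\mathcal{M}$. The restriction $s < p/2+1$ is used exactly here, to keep the Hölder exponent in $(0,1)$.

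The resulting rate is deliberately crude: a Bramble--Hilbert argument would give $\delta_N^{\min(s,1)}$. However, the weaker Hölder route matches the stated exponent $\frac12 - \frac sp$ and suffices for the lemma.
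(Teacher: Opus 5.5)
Your proposal is correct and follows the paper's proof essentially step for step. Both use the factorization $\mathcal{W}_N f = \mathbf{G}_N(\Pi_N f - f)$, an $L^2$ bound on $\mathbf{G}_N$, and the Hölder embedding $H^s \hookrightarrow C^{0,s-p/2}$, which gives $\|\Pi_N f - f\|_{L^2} \le C\delta_N^{s-p/2}\|f\|_{H^s}$ with $\delta_N = \mathcal{O}(N^{-1/p})$. The paper only asserts a constant $C_G$; your uniform kernel bound for $\|\mathbf{G}_N\|$ makes it explicit. The paper also leaves implicit, through its standing choice of $s$, that the Hölder step needs $s < p/2+1$, and you state this.
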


\begin{proof}
    Define the cell-averaging projection operator $\Pi_N: L^2(\mathcal{M}) \to L^2(\mathcal{M})$ acting on any function $f$ by
    \begin{equation}
(\Pi_N f)(\beta) \triangleq \sum_{k=1}^N \mathbf{1}_{A_k}(\beta) f_{A_k} = \sum_{k=1}^N \mathbf{1}_{A_k}(\beta) \frac{1}{\mathrm{Vol}(A_k)} \int_{A_k} f(\zeta) \diff V_g(\zeta).
\end{equation}
    Substituting this into the definition of $\mathbf{G}_N$, we observe that 
    \begin{equation}
[\mathbf{G}_N (\Pi_N f)](\alpha_q) = \sum_{k=1}^N \int_{A_k} g(\alpha_q, \beta) f_{A_k} \diff V_g(\beta) = \sum_{k=1}^N G^{[N]}_{qk} f_{A_k} = [\tilde{\mathbf{G}}_N f](\alpha_q).
\end{equation}
    Consequently, the weak convergence error operator can be factorized as $ \mathcal{W}_N f = \mathbf{G}_N (\Pi_N f - f)$.
    Taking the $L^2$-norm yields
    \begin{equation}
\|\mathcal{W}_N f\|_{L^2} \le C_{G} \|\Pi_N f - f\|_{L^2}.
\end{equation}
for sufficiently large $N$ with some constant $C_G$. For $f \in H^s(\mathcal{M})$ with $s > p/2$, the fractional Sobolev embedding theorem guarantees $H^s(\mathcal{M}) \hookrightarrow C^{s - p/2}(\mathcal{M})$. Then there exists a constant $C_{emb} > 0$ such that
    \begin{equation}
\sup_{\beta \in A_k} |f(\beta) - f_{A_k}| \le \sup_{\beta, \zeta \in A_k} |f(\beta) - f(\zeta)| \le C_{emb} \delta_N^{s - p/2} \|f\|_{H^s}.
\end{equation}
    Hence, $\|\Pi_N f - f\|_{L^2}^2$ is bounded by
    \begin{equation}
\|\Pi_N f - f\|_{L^2}^2 = \sum_{k=1}^N \int_{A_k} |f(\beta) - f_{A_k}|^2 \diff V_g(\beta) \le  C_{emb}^2 \delta_N^{2s - p} \|f\|_{H^s}^2.
\end{equation}
    Substituting $\delta_N = \mathcal{O}(N^{-1/p})$ yields
    \begin{equation}
\|\Pi_N f - f\|_{L^2} \le C_{emb}  N^{\frac{1}{2} - \frac{s}{p}} \|f\|_{H^s}.
\end{equation}
    Therefore, the operator norm is bounded by $\|\mathcal{W}_N\|_{H^s \to L^2} \le \mathcal{O}(N^{\frac{1}{2} - \frac{s}{p}})$. 
\end{proof}

\begin{lemma}\label{lem:G_N_uniform_convergence}
    Under Assumptions \ref{asm:kernel_lipschitz} and \ref{asm:manifold_dim}, the operator difference between $\mathbf{G}_N$ and $\mathbf{G}$ is bounded by
    \begin{equation}
\|\mathbf{G}_N - \mathbf{G}\|_{\mathcal{L}(L^2)} \le \mathcal{O}(\delta_N^{1/2}) = \mathcal{O}(N^{-\frac{1}{2p}}).
\end{equation}
\end{lemma}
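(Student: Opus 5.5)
The plan is to write $\mathbf{G}_N - \mathbf{G}$ as an integral operator with kernel
\[
D_N(\alpha,\beta) \triangleq \sum_{k=1}^N \mathbf{1}_{A_k}(\alpha)\big(K(\alpha_k^N,\beta) - K(\alpha,\beta)\big).
\]
Here $g$ in \eqref{eq:def_G_N} is identified with the kernel $K$. We then bound its Hilbert--Schmidt norm, since
\[
\|\mathbf{G}_N - \mathbf{G}\|_{\mathcal{L}(L^2)} \le \|D_N\|_{L^2(\mathcal{M}\times\mathcal{M})}.
\]
Thus it suffices to control the pointwise oscillation of $\alpha\mapsto K(\alpha,\beta)$ inside each cell $A_k$, uniformly in $\beta$.

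The key step uses Assumption \ref{asm:manifold_dim}, i.e.\ $p\le 3$, together with Assumption \ref{asm:kernel_lipschitz}. Since $p\le3$, we have $2 > p/2$, so the fractional Sobolev embedding $H^2(\mathcal{M}) \hookrightarrow C^{0,\lambda}(\mathcal{M})$ holds with $\lambda = \min\{1, 2 - p/2\} - \epsilon$ for $p=2$, and $\lambda=1/2$ for $p=3$. In every case $p\le 3$ allows at least the exponent $\lambda = 1/2$. Indeed, for $p=3$ the Morrey-type embedding gives exactly Hölder exponent $1/2$; for $p\le 2$ any exponent below $1$ is available, and we take $1/2$.

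It follows that for a.e.\ $\beta$ and all $\alpha,\alpha'$,
\[
|K(\alpha,\beta)-K(\alpha',\beta)| \le C_{emb}\, C_g\, d_g(\alpha,\alpha')^{1/2}.
\]
For $\alpha \in A_k$ we have $d_g(\alpha,\alpha_k^N)\le \delta_N$, since $\alpha_k^N\in A_k$ and $\delta_N$ is the maximal diameter. Hence $|D_N(\alpha,\beta)| \le C_{emb}C_g\,\delta_N^{1/2}$ uniformly. Integrating over $\mathcal{M}\times\mathcal{M}$, which has volume $1$, gives
\[
\|D_N\|_{L^2(\mathcal{M}\times\mathcal{M})} \le C_{emb}C_g\,\delta_N^{1/2}.
\]
Combining this with $\delta_N = \mathcal{O}(N^{-1/p})$ from Assumption \ref{as:uniform_isotropic_graph} yields $\mathcal{O}(N^{-\frac{1}{2p}})$.

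The main obstacle is measure-theoretic: $K(\cdot,\beta)$ is only assumed to lie in $H^2$ for a.e.\ $\beta$, so the pointwise evaluation $K(\alpha_k^N,\beta)$ must be understood through the continuous representative. This is legitimate by the embedding, and the bound is uniform because $\sup_\beta \|K(\cdot,\beta)\|_{H^2}\le C_g$. We must also check that $D_N$ is jointly measurable so that the Hilbert--Schmidt bound applies. This follows since $\beta\mapsto K(\alpha_k^N,\beta)$ is the density of $\nu_{\alpha_k^N}$, which is measurable by the standing assumption on $\alpha\mapsto\nu_\alpha$.

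If the $C^{0,1/2}$ embedding constant is in doubt in dimension $3$, a fallback is to use the $H^2\hookrightarrow C^{0,\lambda}$ embedding with any $\lambda\in(0,1/2]$ valid for $p\le3$. This still yields the stated rate, because the claim only requires exponent $1/2$, which is the worst case $p=3$.
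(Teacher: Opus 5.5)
Your proposal is correct and follows essentially the paper's argument: both rest on the embedding $H^2(\mathcal{M})\hookrightarrow C^{0,1/2}(\mathcal{M})$ for $p\le 3$, which gives the uniform kernel oscillation bound $|K(\alpha_k^N,\beta)-K(\alpha,\beta)|\le C_{emb}C_g\,\delta_N^{1/2}$ on each cell. The only cosmetic difference is how you close the argument: you use the Hilbert--Schmidt norm of the difference kernel, whereas the paper bounds $|(\mathbf{G}_N f-\mathbf{G}f)(\alpha)|$ pointwise via Cauchy--Schwarz and then integrates; both steps use $\mathrm{Vol}(\mathcal{M})=1$ and yield the same constant.
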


\begin{proof}
    For any test function $f \in L^2(\mathcal{M})$ and any point $\alpha \in A_k$,
    \begin{equation}
|(\mathbf{G}_N f)(\alpha) - (\mathbf{G} f)(\alpha)| \le \int_{\mathcal{M}} |g(\alpha_k, \beta) - g(\alpha, \beta)| |f(\beta)| \diff V_g(\beta).
\end{equation}
    By Assumption \ref{asm:kernel_lipschitz}, $\sup_\beta \|g(\cdot, \beta)\|_{H^2(\mathcal{M})} \le C_g$. Since the manifold dimension is $p \le 3$, the Sobolev embedding theorem gives $H^2(\mathcal{M}) \hookrightarrow C^{0,1/2}(\mathcal{M})$. Then there exists a constant $C_{emb} > 0$ independent of $N$ such that
    \begin{equation}
|g(\alpha_k, \beta) - g(\alpha, \beta)| \le C_{emb} \|g(\cdot, \beta)\|_{H^2(\mathcal{M})} d_g(\alpha_k, \alpha)^{1/2} \le C_{emb} C_g \delta_N^{1/2}.
\end{equation}
    Substituting this bound into the integral and applying the Cauchy-Schwarz inequality over the manifold yields
    \begin{equation}
|(\mathbf{G}_N f)(\alpha) - (\mathbf{G} f)(\alpha)| \le C_{emb} C_g \delta_N^{1/2} \int_{\mathcal{M}} |f(\beta)| \diff V_g(\beta) \le C_{emb} C_g \delta_N^{1/2}  \|f\|_{L^2}.
\end{equation}
    Consequently, the $L^2$-norm is bounded by
    \begin{equation}
\|\mathbf{G}_N f - \mathbf{G} f\|_{L^2} \le C_{emb} C_g \delta_N^{1/2} \|f\|_{L^2}.
\end{equation}
     This completes the proof.
\end{proof}

Recall the operator difference $\mathcal{E}_N \triangleq \Phi_N^\omega - \mathbf{G}$. It can be decomposed into the stochastic sampling error $\mathcal{D}_N^\omega$, the weak convergence error $\mathcal{W}_N$, and the discretization error $\mathcal{R}_N$:
\begin{equation}
    \mathcal{E}_N = \mathcal{D}_N^\omega + \mathcal{W}_N + \mathcal{R}_N \triangleq (\Phi_N^\omega - \tilde{\mathbf{G}}_N) + (\tilde{\mathbf{G}}_N - \mathbf{G}_N) + (\mathbf{G}_N - \mathbf{G}).
\end{equation}

\begin{proposition}\label{prop:total_operator_error}
    Let Assumptions \ref{asm:kernel_lipschitz}, \ref{asm:general_degree}, and \ref{asm:manifold_dim} hold. For any confidence level $\delta \in (0,1)$, the operator norm bound $\epsilon_{op} \triangleq \|\mathcal{E}_N\|_{H^s \to L^2}$ satisfies
    \begin{equation}
\epsilon_{op} \le \mathcal{O}\left( \sqrt{\frac{\ln(2/\delta)}{d_{\min}}} + N^{-\kappa(p,s)} \right)
\end{equation}
    with probability at least $1-\delta$, where $\kappa(p,s) \triangleq \min\left\{\frac{1}{2p}, \frac{s}{p} - \frac{1}{2}\right\}$.
\end{proposition}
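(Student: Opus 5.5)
The plan is to bound each term of the decomposition $\mathcal{E}_N = \mathcal{D}_N^\omega + \mathcal{W}_N + \mathcal{R}_N$ separately in the $H^s \to L^2$ norm and then apply the triangle inequality. The two deterministic terms are already handled. Lemma \ref{lem:W_N_weak_convergence} gives $\|\mathcal{W}_N\|_{H^s\to L^2} \le \mathcal{O}(N^{1/2 - s/p})$. Lemma \ref{lem:G_N_uniform_convergence}, combined with $\|f\|_{L^2}\le\|f\|_{H^s}$, gives $\|\mathcal{R}_N\|_{H^s\to L^2} \le \|\mathcal{R}_N\|_{\mathcal{L}(L^2)} \le \mathcal{O}(N^{-1/(2p)})$. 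Together these produce the $N^{-\kappa(p,s)}$ term. All the remaining work is a high-probability bound $\|\mathcal{D}_N^\omega\|_{H^s\to L^2} \le \mathcal{O}(\sqrt{\ln(2/\delta)/d_{\min}})$ holding with probability at least $1-\delta$.

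For the stochastic term I would factor through the Hilbert--Schmidt structure. Write $\mathcal{D}_N^\omega = \mathcal{D}_N^\omega \circ \mathcal{J}_s$, where $\mathcal{J}_s$ is the identity viewed from $H^s$. Since $s>p/2$, Weyl's law $\lambda_k \sim k^{2/p}$ makes $\sum_k (1+\lambda_k)^{-s}<\infty$. Hence $H^s$ is a reproducing kernel Hilbert space with a bounded kernel $\kappa_s(\beta,\beta) = \sum_k (1+\lambda_k)^{-s}\phi_k(\beta)^2 \le C_s$, and point evaluation and cell averaging are bounded functionals of $H^s$ norm at most $\sqrt{C_s}$. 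This gives the bound $\|\mathcal{D}_N^\omega\|_{H^s\to L^2} \le \|\mathcal{D}_N^\omega\|_{HS(H^s,L^2)}$. I would then decompose row-wise. For $\alpha\in A_q$,
\begin{equation*}
(\mathcal{D}_N^\omega f)(\alpha) = \frac{1}{d_q}\sum_{j=1}^{d_q} \xi_j^q(f), \qquad \xi_j^q(f) \triangleq f_{A_{L_j^{[N],q}}} - \sum_k G^{[N]}_{qk} f_{A_k}.
\end{equation*}
Each $\xi_j^q$ is a centered, independent random element of the dual space $(H^s)^* \cong H^s$, bounded by $2\sqrt{C_s}$ in norm. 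Applying a Hilbert-space Hoeffding/Pinelis inequality to each row gives, with probability at least $1-\delta'$,
\begin{equation*}
\Big\|\frac{1}{d_q}\sum_j \xi_j^q\Big\|_{(H^s)^*} \le C\sqrt{\ln(2/\delta')/d_q}.
\end{equation*}

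The main obstacle is aggregating over the $N$ rows without losing a $\log N$ factor. A union bound with $\delta'=\delta/N$ produces $\sqrt{\ln(2N/\delta)/d_{\min}}$. In the sparse regime this is $\mathcal{O}(1)$ rather than decaying, so it is too crude. Instead I would treat the whole operator at once. The squared HS norm equals
\begin{equation*}
\|\mathcal{D}_N^\omega\|_{HS}^2 = \frac{1}{N}\sum_q \Big\|\frac{1}{d_q}\sum_j \xi_j^q\Big\|^2_{(H^s)^*},
\end{equation*}
which is an average of $N$ independent nonnegative terms. Each term has mean at most $4C_s/d_q$ and is bounded by $4C_s$. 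Equivalently, $\mathcal{D}_N^\omega$ is a sum of $\sum_q d_q$ independent centered HS-valued summands $Z_{q,j} = \frac{1}{d_q}\,\mathbf{1}_{A_q}\otimes\xi_j^q$, whose HS norms are at most $2\sqrt{C_s}/(d_q\sqrt N)$. I would apply the Pinelis (Hilbert-space Bernstein) inequality to this sum. The variance proxy is $\sum_{q,j}\mathbb{E}\|Z_{q,j}\|_{HS}^2 \le 4C_s/d_{\min}$, and the uniform bound is $B \le 2\sqrt{C_s}/(d_{\min}\sqrt N)$. This yields, with probability at least $1-\delta$,
\begin{equation*}
\|\mathcal{D}_N^\omega\|_{HS} \le C\Big(\sqrt{\ln(2/\delta)/d_{\min}} + \ln(2/\delta)/(d_{\min}\sqrt N)\Big) = \mathcal{O}\big(\sqrt{\ln(2/\delta)/d_{\min}}\big).
\end{equation*}
The delicate points are verifying the exact form of the Pinelis constants and the RKHS trace bound on the compact manifold, which requires uniform $L^\infty$ control of the eigenfunction sums via the local Weyl law. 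Since the Proposition asserts probability $1-\delta$ and this bound is the only random ingredient, combining it with the two deterministic lemmas completes the argument.
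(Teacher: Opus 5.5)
Your proposal is correct and follows the paper's proof: the triangle inequality over $\mathcal{D}_N^\omega+\mathcal{W}_N+\mathcal{R}_N$, the two deterministic lemmas for the $N^{-\kappa(p,s)}$ term, and the Pinelis Hilbert-space Bernstein inequality in the Hilbert--Schmidt space from $H^s$ to $L^2$ for the sampling term (as in Theorem~\ref{thm:op-norm-main-tight}). The one difference is that you split into $\sum_q d_q$ summands $Z_{q,j}$ instead of the paper's $N$ row summands $X_q=\mathbf{1}_{A_q}\otimes\tilde\eta_q$; this shrinks the range term from $\ln(2/\delta)/\sqrt{N}$ to $\ln(2/\delta)/(d_{\min}\sqrt{N})$, so for fixed $\delta$ it is absorbed directly into $\sqrt{\ln(2/\delta)/d_{\min}}$, whereas the paper absorbs it into $N^{-\kappa(p,s)}$ using $\kappa(p,s)\le 1/2$.
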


\begin{proof}
    The triangle inequality gives
    \begin{equation}
\|\mathcal{E}_N\|_{H^s \to L^2} \le \|\mathcal{D}_N^\omega\|_{H^s \to L^2} + \|\mathcal{W}_N\|_{H^s \to L^2} + \|\mathcal{R}_N\|_{H^s \to L^2}.
\end{equation}
    For $\mathcal{R}_N = \mathbf{G}_N - \mathbf{G}$, Lemma \ref{lem:G_N_uniform_convergence} establishes
    \begin{equation}
\|\mathcal{R}_N\|_{H^s \to L^2} \le \|\mathcal{R}_N\|_{\mathcal{L}(L^2)} = \mathcal{O}(N^{-\frac{1}{2p}}).
\end{equation}
    For $\mathcal{W}_N=\tilde{\mathbf{G}}_N - \mathbf{G}_N$, Lemma \ref{lem:W_N_weak_convergence} yields
    \begin{equation}
\|\mathcal{W}_N\|_{H^s \to L^2} \le \mathcal{O}(N^{\frac{1}{2} - \frac{s}{p}}).
\end{equation}
    For $\mathcal{D}_N^\omega=\Phi_N^\omega - \tilde{\mathbf{G}}_N$, Theorem \ref{thm:op-norm-main-tight} provides the Bernstein tail bound
    \begin{equation}
\|\mathcal{D}_N^\omega\|_{H^s \to L^2} \le \mathcal{O}\left( \sqrt{\frac{\ln(2/\delta)}{d_{\min}}} + \frac{\ln(2/\delta)}{\sqrt{N}} \right).
\end{equation}
    Under Assumption \ref{asm:manifold_dim}, $p \le 3$, which ensures $1/(2p) \ge 1/6 > 0$. The condition $s > p/2$ ensures $s/p - 1/2 > 0$. Therefore, $\kappa(p,s)$ is positive. Since $\delta$ is a fixed confidence level, the term $\ln(2/\delta) N^{-1/2}$ is bounded by $\mathcal{O}(N^{-1/2})$. Because $p \ge 1$, we have $1/(2p) \le 1/2$, which implies $\kappa(p,s) \le 1/2$. Thus, the algebraic tail $\mathcal{O}(N^{-1/2})$ is asymptotically absorbed by or equivalent to $\mathcal{O}(N^{-\kappa(p,s)})$. Combining all error terms establishes the final bound
    \begin{equation}
\epsilon_{op} \le \mathcal{O}\left( \sqrt{\frac{\ln(2/\delta)}{d_{\min}}} + N^{-\kappa(p,s)} \right).
\end{equation}
\end{proof}

\section{Proof of Lemma \ref{lem:nodal_interpolation_error}}\label{pf:nodal_interpolation_error}
\begin{proof}
For a compact Riemannian manifold $\mathcal{M}$ of dimension $p \le 3$, the Sobolev embedding theorem provides the continuous embedding $H^2(\mathcal{M}) \hookrightarrow C^{0, 1/2}(\mathcal{M})$. There exists a constant $C_{cell}  > 0$ such that for any $\phi \in H^2(\mathcal{M})$ and any $x, y \in \mathcal{M}$,
\begin{equation}
|\phi(x) - \phi(y)| \le C_{cell}  d_g(x, y)^{1/2} \|\phi\|_{H^2}.
\end{equation}
For any point $\alpha$ within a specific cell $A_k$, we have $d_g(\alpha, \alpha_k^N) \le \delta_N$. Therefore,
\begin{equation}
|\phi(\alpha) - (P_N \phi)(\alpha)| = |\phi(\alpha) - \phi(\alpha_k^N)| \le C_{cell}  \delta_N^{1/2} \|\phi\|_{H^2}.
\end{equation}
Integrating over $A_k$ gives
\begin{equation}
\int_{A_k} |\phi(\alpha) - (P_N \phi)(\alpha)|^2 dV_g(\alpha) \le C_{cell} ^2 \delta_N \|\phi\|_{H^2}^2 \mathrm{Vol}(A_k).
\end{equation}
Hence,
\begin{equation}
\|(I - P_N) \phi\|_{L^2}^2 = \sum_{k=1}^N \int_{A_k} |\phi(\alpha) - (P_N \phi)(\alpha)|^2 dV_g(\alpha) \le C_{cell} ^2 \delta_N \|\phi\|_{H^2}^2 .
\end{equation}
\end{proof}

\section{Proof of Lemma \ref{lem:semigroup_difference_spectral}}\label{pf:semigroup_difference_spectral}
\begin{proof}
Fix $\tau \in [0, t]$ and denote $\psi = f(\tau) \in H^2(\mathcal{M})$. Let $\{\lambda_m, \phi_m\}_{m=0}^\infty$ be the eigenvalues and corresponding orthonormal eigenfunctions of the negative Laplace-Beltrami operator $-\Delta_g$ on $\mathcal{M}$, arranged in ascending order. For any $\lambda > 0$, the spectral truncation operator $\mathcal{T}_\lambda \colon L^2(\mathcal{M}) \to L^2(\mathcal{M})$ is defined for any $f \in L^2(\mathcal{M})$ by
\begin{equation}
\mathcal{T}_\lambda f \triangleq \sum_{\lambda_m \le \lambda} \langle f, \phi_m \rangle_{L^2} \phi_m.
\end{equation}

We decompose the test function $\psi$ into two components
\begin{equation}
\psi = \psi_\lambda + \psi^\perp \triangleq \mathcal{T}_\lambda \psi + (I - \mathcal{T}_\lambda) \psi.
\end{equation}
Based on the spectral expansion $\psi = \sum_{m=0}^\infty c_m \phi_m$ with $c_m = \langle \psi, \phi_m \rangle_{L^2}$, the Sobolev norm is $\|\psi\|_{H^s}^2 = \sum_{m=0}^\infty (1+\lambda_m)^s c_m^2$. Then, for $k \ge 2$,
\begin{equation}
\|\psi_\lambda\|_{H^k}^2 = \sum_{\lambda_m \le \lambda} (1+\lambda_m)^{k-2} (1+\lambda_m)^2 c_m^2 \le (1+\lambda)^{k-2} \|\psi\|_{H^2}^2.
\end{equation}
For $\psi_\perp$, its $L^2$-norm is bounded by
\begin{equation}
\begin{aligned}
\|\psi_\perp\|_{L^2}^2 &= \sum_{\lambda_m > \lambda} (1+\lambda_m)^{-2} (1+\lambda_m)^2 c_m^2 \\
&\le (1+\lambda)^{-2} \sum_{\lambda_m > \lambda} (1+\lambda_m)^2 c_m^2 \le (1+\lambda)^{-2} \|\psi\|_{H^2}^2.
\end{aligned}
\end{equation}
Therefore, for $k \ge 2$,
\begin{equation}
\|\psi_\lambda\|_{H^k} \le (1+\lambda)^{(k-2)/2} \|\psi\|_{H^2}, \quad \|\psi_\perp\|_{L^2} \le (1+\lambda)^{-1} \|\psi\|_{H^2}.
\end{equation}

Define the total approximation error as $e \triangleq \|S_N(t-\tau) P_N \psi - P_N S_0(t-\tau) \psi\|_{L^2}$. By the triangle inequality,
\begin{equation}\label{eq:e_total}
      e \le \|S_N(t-\tau) P_N \psi_\lambda - P_N S_0(t-\tau) \psi_\lambda\|_{L^2} + \|S_N(t-\tau) P_N \psi^\perp\|_{L^2} + \|P_N S_0(t-\tau) \psi^\perp\|_{L^2}.
\end{equation}
\begin{itemize}
    \item For the last term, recall that $H^2(\mathcal{M}) \hookrightarrow C^{0, 1/2}(\mathcal{M})$ for $p \le 3$. By Lemma \ref{lem:nodal_interpolation_error},
\begin{equation}
\|(I - P_N) S_0(t-\tau) \psi^\perp\|_{L^2} \le C_{cell} \delta_N^{1/2} \|S_0(t-\tau) \psi^\perp\|_{H^2}\le C_{cell} \delta_N^{1/2} e^{|a|T}\| \psi^\perp\|_{H^2}.
\end{equation}
Hence,
\begin{align*}
       \|P_N S_0(t-\tau) \psi^\perp\|_{L^2} \le& \|S_0(t-\tau) \psi^\perp\|_{L^2} + \|(I - P_N) S_0(t-\tau) \psi^\perp\|_{L^2} \\
        \le& e^{|a|T}(1+\lambda)^{-1} \|\psi\|_{H^2} + C_{cell} \delta_N^{1/2} e^{|a|T}\|\psi\|_{H^2}.
\end{align*}

\item Similarly, for the second term, one has
\begin{align*}
     \|S_N(t-\tau) P_N \psi^\perp\|_{L^2} &\le M e^{\omega_0^+ T} \|P_N \psi^\perp\|_{L^2} \\
     &\le M e^{\omega_0^+ T} \big(e^{|a|T}(1+\lambda)^{-1} + C_{cell} \delta_N^{1/2}\big) \|\psi\|_{H^2}.
\end{align*}

\item For the first term in terms of $\psi_\lambda$, the spectral projection ensures $\psi_\lambda \in C^\infty(\mathcal{M})$. Consequently,
\begin{equation}\label{eq:lem5_3_eq}
      \hspace{-0.5cm}S_N(t-\tau) P_N \psi_\lambda - P_N S_0(t-\tau) \psi_\lambda = \int_0^{t-\tau} S_N(t-\tau-s) \Big(\mathcal{A}^N_0 P_N - P_N \mathcal{A}_0\Big) S_0(s) \psi_\lambda \diff s.
\end{equation}
Let $\phi = S_0\psi_\lambda$. Recall the definition of $\mathbf{L}_N$,
\begin{equation}
(\mathbf{L}_N P_N \phi)(\alpha_i^N) = \frac{1}{|\mathcal{N}_i|}\sum_{j\in\mathcal{N}_i} w_{ij}^{(N)} \big( (P_N \phi)_{A_j} - (P_N \phi)_{A_i} \big).
\end{equation}
Since $P_N \phi$ takes the constant value $\phi(\alpha_k^N)$ on each cell $A_k$, one has $(P_N \phi)_{A_k} = \phi(\alpha_k^N)$. Then,
\begin{equation}
[\mathcal{A}^N_0 P_N \phi - P_N \mathcal{A}_0 \phi](\alpha_i^N) = \gamma \left( (\mathbf{L}_N P_N \phi)(\alpha_i^N) - \frac{1}{2}\Delta_g \phi(\alpha_i^N) \right).
\end{equation}
In the Riemannian normal coordinates centered at $\alpha_i^N\in \M$, the geodesic displacement vector in the tangent space $T_{\alpha_i^{N}}\mathcal{M}$ is $v_{ij} = \exp_{\alpha_i^N}^{-1}(\alpha_j^N)$ where $\alpha_j^N\in \M$. The Taylor expansion is
\begin{equation}
\phi(\alpha_j^N) - \phi(\alpha_i^N) = \nabla \phi(\alpha_i^N) \cdot v_{ij} + \frac{1}{2} v_{ij}^\top \nabla^2 \phi(\alpha_i^N) v_{ij} + R(v_{ij}),
\end{equation}
where $|R(v_{ij})| \le C \|\phi\|_{C^3} \|v_{ij}\|^3$ (see \cite{zhang2026IFAC} for detailed proof). Then applying Assumption \ref{as:uniform_isotropic_graph} provides \cite{zhang2026IFAC}
\begin{equation}
\begin{aligned}
    (\mathbf{L}_N P_N \phi)(\alpha_i^N)  - \frac{1}{2}\Delta_g \phi(\alpha_i^N) &= \nabla \phi \cdot \mathcal{O}(r_N) + \mathcal{O}(r_N) \|\phi\|_{C^3}\\
    &= \mathcal{O}(r_N) \|\phi\|_{C^2} + \mathcal{O}(r_N) \|\phi\|_{C^3}\\
    &= \mathcal{O}(r_N)\|\phi\|_{C^3}
\end{aligned}
\end{equation}
due to $\|\phi\|_{C^2} \le \|\phi\|_{C^3}$. Since $H^5(\mathcal{M}) \hookrightarrow C^3(\mathcal{M})$ for $p \le 3$, there exists a constant $C^\prime > 0$ such that
\begin{equation}
\left\| \Big(\mathcal{A}^N_0 P_N - P_N \mathcal{A}_0\Big) \phi \right\|_{L^2} \le C^\prime r_N \|\phi\|_{H^5}.
\end{equation}
Applying $\|\phi\|_{H^5} = \|S_0 \psi_\lambda\|_{H^5} \le e^{|a|T}(1+\lambda)^{3/2} \|\psi\|_{H^2}$ to \eqref{eq:lem5_3_eq}:
\begin{equation}
\|S_N(t-\tau) P_N \psi_\lambda - P_N S_0(t-\tau) \psi_\lambda\|_{L^2} \le M e^{\omega_0^+ T} T C^\prime r_N e^{|a|T}(1+\lambda)^{3/2} \|\psi\|_{H^2}.
\end{equation}
\end{itemize}

Finally, combining the bounds of three terms in \eqref{eq:e_total} gives
\begin{equation}
e \le C \Big( (1+\lambda)^{-1} + \delta_N^{1/2} + r_N (1+\lambda)^{3/2} \Big) \|\psi\|_{H^2},
\end{equation}
where $C$ is a constant independent of $N$ and $\lambda$. To obtain the optimal convergence rate, we balance the two terms dependent on the threshold $\lambda$ by equating $(1+\lambda)^{-1} = r_N (1+\lambda)^{3/2}$. This yields the optimal cut-off threshold $1+\lambda = r_N^{-2/5}$.

Substituting $1+\lambda = r_N^{-2/5}$ back into the total error bound provides
\begin{equation}
e \le C \Big( 2 r_N^{2/5} + \delta_N^{1/2} \Big) \|\psi\|_{H^2}.
\end{equation}
Taking the supremum over $\tau \in [0,t]$ and $t \in [0,T]$ establishes the uniform bound $\mathcal{O}(r_N^{2/5} + \delta_N^{1/2})$. This completes the proof.
\end{proof}
\section{Concentration of the Sampling Operator on Manifolds}
\label{apdix:norm_bound}

This section establishes a tail bound for the operator norm of $\mathcal{D}_N^\omega: H^s(\mathcal{M}) \to L^2(\mathcal{M})$ for $s > p/2$. Let $C_S > 0$ be the Sobolev embedding constant satisfying $\|f\|_{L^\infty} \le C_S \|f\|_{H^s}$.

Let $\mathcal{H}_{HS}$ be the space of Hilbert-Schmidt operators from $H^s(\mathcal{M})$ to $L^2(\mathcal{M})$, equipped with the norm $\| \cdot \|_{HS}$. Note that the operator norm is bounded by $\| \cdot \|_{H^s \to L^2} \le \| \cdot \|_{HS}$.

\begin{lemma}[Hilbert Space Vector Bernstein Inequality \cite{pinelis1994optimum}]
\label{thm:hilbert-bernstein}
Let $\mathcal{H}$ be a separable Hilbert space. Let $\{X_k\}_{k=1}^N$ be a finite sequence of independent random vectors in $\mathcal{H}$. Assume that for each $k$, $\mathbb{E}[X_k] = 0$ and there exists a deterministic constant $M_k > 0$ such that $\|X_k\|_{\mathcal{H}} \le M_k$ almost surely. Define $M = \max_k M_k$ and let the variance parameter $\sigma^2$ satisfy
\begin{equation}
\sum_{k=1}^N \mathbb{E} \left[ \|X_k\|_{\mathcal{H}}^2 \right] \le \sigma^2.
\end{equation}
Then, for all $t > 0$, the following tail bound holds
\begin{equation}
\mathbb{P}_z \left( \left\| \sum_{k=1}^N X_k \right\|_{\mathcal{H}} \ge t \right) \le 2 \exp\left( -\frac{t^2/2}{\sigma^2 + M t/3} \right).
\end{equation}
\end{lemma}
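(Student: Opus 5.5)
The plan is the classical Pinelis argument in its simplest Hilbert-space form. We bound the exponential moment $\mathbb{E}\cosh(\lambda\|S_N\|_{\mathcal H})$ of the partial sum $S_N \triangleq \sum_{k=1}^N X_k$ by a product over $k$, then use a Chernoff bound and optimize over $\lambda>0$. The factor $2$ in the statement comes from $e^{x}\le 2\cosh x$.

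\textbf{Step 1: one-step inequality.} Fix $\lambda>0$, a deterministic $x\in\mathcal{H}$, and a mean-zero random vector $X$ with $\|X\|_{\mathcal H}\le M$. We aim to show
\begin{equation*}
\mathbb{E}\cosh(\lambda\|x+X\|_{\mathcal H}) \le \cosh(\lambda\|x\|_{\mathcal H})\,\Big(1+\mathbb{E}\big[e^{\lambda\|X\|_{\mathcal H}}-1-\lambda\|X\|_{\mathcal H}\big]\Big).
\end{equation*}
Set $F(y)=\cosh(\lambda\|y\|_{\mathcal H})$. This is a smooth function of $y$, since $\cosh$ is even in $r=\|y\|_{\mathcal H}$ and so depends smoothly on $\|y\|_{\mathcal H}^2$. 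Its second derivative along $h$ is
\begin{equation*}
\lambda^2\cosh(\lambda r)\,\frac{\langle y,h\rangle^2}{r^2}+\lambda\frac{\sinh(\lambda r)}{r}\Big(\|h\|^2-\frac{\langle y,h\rangle^2}{r^2}\Big).
\end{equation*}
Using $\sinh(u)/u\le\cosh u$, this gives $D^2F(y)[h,h]\le\lambda^2\|h\|_{\mathcal H}^2F(y)$. This is the one place where the Hilbert structure enters: it is the $2$-smoothness of the norm, with constant $1$.

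Now let $u(\theta)=\mathbb{E}F(x+\theta X)$. Then $u(0)=F(x)$ and $u'(0)=\mathbb{E}\,DF(x)[X]=0$ because $\mathbb{E}X=0$. By the triangle inequality, $F(x+\theta X)\le F(x)e^{\lambda\theta\|X\|_{\mathcal H}}$. Taylor's formula with integral remainder then yields
\begin{equation*}
u(1)\le F(x)\Big(1+\mathbb{E}\int_0^1(1-\theta)\lambda^2\|X\|_{\mathcal H}^2e^{\lambda\theta\|X\|_{\mathcal H}}\diff\theta\Big),
\end{equation*}
and the integral evaluates exactly to $e^{\lambda\|X\|}-1-\lambda\|X\|$.

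\textbf{Step 2: iteration.} Apply Step 1 conditionally on $X_1,\dots,X_{k-1}$, with $x=S_{k-1}$ and $X=X_k$; independence keeps $X_k$ mean zero under this conditioning. Inducting over $k$ gives
\begin{equation*}
\mathbb{E}\cosh(\lambda\|S_N\|_{\mathcal H})\le\prod_{k=1}^N\Big(1+\mathbb{E}\big[e^{\lambda\|X_k\|}-1-\lambda\|X_k\|\big]\Big).
\end{equation*}
Next, $e^{v}-1-v=\sum_{j\ge2}v^j/j!\le \frac{v^2}{2}\sum_{j\ge0}(v/3)^j=\frac{v^2/2}{1-v/3}$ for $0\le v<3$. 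With $v=\lambda\|X_k\|\le\lambda M$, $1+y\le e^y$, and the variance hypothesis, this gives
\begin{equation*}
\mathbb{E}\cosh(\lambda\|S_N\|_{\mathcal H})\le\exp\Big(\frac{\lambda^2\sigma^2/2}{1-\lambda M/3}\Big),\qquad 0<\lambda<3/M.
\end{equation*}

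\textbf{Step 3: Chernoff bound.} Markov's inequality and $e^{x}\le2\cosh x$ give
\begin{equation*}
\mathbb{P}_z(\|S_N\|_{\mathcal H}\ge t)\le 2e^{-\lambda t}\,\mathbb{E}\cosh(\lambda\|S_N\|_{\mathcal H}).
\end{equation*}
Choose $\lambda=t/(\sigma^2+Mt/3)$, which lies in $(0,3/M)$. Then $1-\lambda M/3=\sigma^2/(\sigma^2+Mt/3)$, so the exponent becomes $\frac{t^2/2}{\sigma^2+Mt/3}-\frac{t^2}{\sigma^2+Mt/3}=-\frac{t^2/2}{\sigma^2+Mt/3}$, which is the claimed tail bound.

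I expect Step 1 to be the main obstacle, specifically the Hessian bound on $\cosh(\lambda\|y\|)$ and its smoothness at $y=0$. If it proves delicate, I would instead work with $\cosh(\lambda\sqrt{\|y\|^2+\epsilon})$ and let $\epsilon\to0$. I would also confirm that measurability and separability are not an issue: $\|S_N\|_{\mathcal H}$ is a genuine random variable, and the conditional expectations are well defined because $\mathcal H$ is separable.
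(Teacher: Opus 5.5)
Your proof is correct and complete. The paper gives no argument of its own here and only cites Pinelis; what you have written is the Hilbert-space ($(2,1)$-smooth) case of Pinelis's $\cosh(\lambda\|\cdot\|_{\mathcal{H}})$ argument, except that you reach the Bernstein form directly via $e^{v}-1-v\le \frac{v^2/2}{1-v/3}$ instead of going through a Bennett-type bound.

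The one loose end is the degenerate case $\sigma^2=0$. There your choice $\lambda=t/(\sigma^2+Mt/3)=3/M$ falls outside $(0,3/M)$. In that case every $X_k=0$ almost surely, so the left-hand side is zero and the bound holds trivially.
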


For any $q \in V^N$, let $\{L_j^{[N],q}\}_{j=1}^{d_q}$ be drawn independently from $G^{[N]}_{q,\cdot}$ and recall the integral average $f_{A_k} = \frac{1}{\mathrm{Vol}(A_k)} \int_{A_k} f(\beta) \diff V_g(\beta)$. By the Riesz representation theorem, there exists a unique representative random vector $\tilde{\eta}_q \in H^s(\mathcal{M})$ satisfying
\begin{equation}
    \langle \tilde{\eta}_q, f \rangle_{H^s} = \frac{1}{d_q} \sum_{j=1}^{d_q} f_{A_{L_j^{[N],q}}} - \sum_{k=1}^N G^{[N]}_{qk} f_{A_k}
\end{equation}
for all $f \in H^s(\mathcal{M})$.

\begin{lemma}
\label{lem:eta_bounded_tight}
Assume $s > p/2$. For each $q \in V^N$, the representative random vector $\tilde{\eta}_q \in H^s(\mathcal{M})$ satisfies $\|\tilde{\eta}_q\|_{H^s} \le 2C_S$ almost surely. The expectation of its squared norm satisfies
\begin{equation}
    \mathbb{E}_z \left[ \|\tilde{\eta}_q\|_{H^s}^2 \right] \le \frac{C_S^2}{d_q}.
\end{equation}
\end{lemma}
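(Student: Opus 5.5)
We write $\tilde{\eta}_q$ as an average of i.i.d. centered vectors. For each $j=1,\dots,d_q$, let $\xi_j\in H^s(\mathcal{M})$ be the Riesz representer of the functional
\begin{equation}
\ell_j(f) \triangleq f_{A_{L_j^{[N],q}}} - \sum_{k=1}^N G^{[N]}_{qk} f_{A_k}.
\end{equation}
Then $\tilde{\eta}_q = \frac{1}{d_q}\sum_{j=1}^{d_q}\xi_j$. Since the $L_j^{[N],q}$ are i.i.d. with law $G^{[N]}_{q,\cdot}$, the $\xi_j$ are i.i.d. and $\mathbb{E}_z[\xi_j]=0$, because $\mathbb{E}_z[f_{A_{L_j}}]=\sum_k G^{[N]}_{qk}f_{A_k}$. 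The first task is therefore to bound the norm of these functionals, using the fact that Riesz representers preserve norms: $\|\xi_j\|_{H^s} = \sup_{\|f\|_{H^s}\le 1}|\ell_j(f)|$.

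\textbf{Almost sure bound.} For any $f\in H^s$ and any cell, Jensen's inequality and the embedding $\|f\|_{L^\infty}\le C_S\|f\|_{H^s}$ (valid since $s>p/2$) give $|f_{A_k}|\le \|f\|_{L^\infty}\le C_S\|f\|_{H^s}$. Because $G^{[N]}$ is row-stochastic, $\bigl|\sum_k G^{[N]}_{qk}f_{A_k}\bigr|\le C_S\|f\|_{H^s}$ as well. Hence $|\ell_j(f)|\le 2C_S\|f\|_{H^s}$, so $\|\xi_j\|_{H^s}\le 2C_S$. The triangle inequality applied to the average then yields $\|\tilde\eta_q\|_{H^s}\le 2C_S$ almost surely.

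\textbf{Second moment.} By independence and zero mean, the cross terms vanish:
\begin{equation}
\mathbb{E}_z\|\tilde\eta_q\|_{H^s}^2=\frac{1}{d_q^2}\sum_{j}\mathbb{E}_z\|\xi_j\|_{H^s}^2=\frac{1}{d_q}\mathbb{E}_z\|\xi_1\|_{H^s}^2 .
\end{equation}
It therefore suffices to show $\mathbb{E}_z\|\xi_1\|_{H^s}^2\le C_S^2$; the crude bound $4C_S^2$ would lose a factor of 4. To get the sharp constant, use the variance identity: $\xi_1=\zeta_{L}-\mathbb{E}_z\zeta_L$, where $\zeta_k$ is the Riesz representer of $f\mapsto f_{A_k}$. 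In a Hilbert space, $\mathbb{E}\|\zeta_L-\mathbb{E}\zeta_L\|^2=\mathbb{E}\|\zeta_L\|^2-\|\mathbb{E}\zeta_L\|^2\le \mathbb{E}\|\zeta_L\|^2$. Finally, $\|\zeta_k\|_{H^s}=\sup_{\|f\|_{H^s}\le1}|f_{A_k}|\le C_S$, which gives $\mathbb{E}_z\|\xi_1\|^2\le C_S^2$ and hence the claim.

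The main obstacle is obtaining the constant $C_S^2$ rather than $4C_S^2$ in the second-moment bound. The step that resolves it is the decomposition of $\xi_1$ as a centered version of $\zeta_L$, together with the Hilbert-space variance inequality. Everything else is a direct consequence of the Sobolev embedding and the row-stochasticity of $G^{[N]}$.
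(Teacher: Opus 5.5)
Your proposal is correct and follows essentially the same route as the paper: both write $\tilde{\eta}_q$ as the average of the i.i.d.\ centered representers $\psi_{L_j^{[N],q}} - \mathbb{E}_z[\psi_{L_1^{[N],q}}]$, bound $\|\psi_k\|_{H^s} \le C_S$ via the Sobolev embedding, and then use the triangle inequality for the almost-sure bound and orthogonality of independent centered terms for the second moment. The one real difference is that you state and justify the Hilbert-space variance identity $\mathbb{E}\|\zeta_L - \mathbb{E}\zeta_L\|^2 = \mathbb{E}\|\zeta_L\|^2 - \|\mathbb{E}\zeta_L\|^2$, which the paper's step $\mathbb{E}_z[\|Y_j^q\|_{H^s}^2] \le \mathbb{E}_z[\|\psi_{L_j^{[N],q}}\|_{H^s}^2]$ uses without comment.
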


\begin{proof}
For any cell $A_k$, let $\psi_k \in H^s(\mathcal{M})$ satisfy $\langle \psi_k, f \rangle_{H^s} = f_{A_k}$. By the Sobolev embedding theorem, $|f_{A_k}| \le C_S \|f\|_{H^s}$, which implies $\|\psi_k\|_{H^s} \le C_S$. The vector $\tilde{\eta}_q$ can be written as
\begin{equation}
\tilde{\eta}_q = \frac{1}{d_q} \sum_{j=1}^{d_q} Y_j^q,
\end{equation}
where $Y_j^q = \psi_{L_j^{[N],q}} - \mathbb{E}_z[\psi_{L_1^{[N],q}}]$. The sequence $\{Y_j^q\}_{j=1}^{d_q}$ consists of independent, identically distributed, and zero-mean random vectors in $H^s(\mathcal{M})$. 

Since $\|\psi_k\|_{H^s} \le C_S$, the triangle inequality yields $\|Y_j^q\|_{H^s} \le 2C_S$. Therefore, $\|\tilde{\eta}_q\|_{H^s} \le 2C_S$. In addition, $\mathbb{E}_z [ \|Y_j^q\|_{H^s}^2 ] \le \mathbb{E}_z [ \|\psi_{L_j^{[N],q}}\|_{H^s}^2 ] \le C_S^2$. Consequently,
\begin{equation}
\mathbb{E}_z [ \|\tilde{\eta}_q\|_{H^s}^2 ] = \frac{1}{d_q^2} \sum_{j=1}^{d_q} \mathbb{E}_z [ \|Y_j^q\|_{H^s}^2 ]\le C_S^2 / d_q.
\end{equation}
\end{proof}

The operator action on $f \in H^s(\mathcal{M})$ is
\begin{equation} \label{eq:def_E_N_derived}
    [\mathcal{D}_N^\omega f](x) = \sum_{q=1}^N \mathbf{1}_{A_q}(x) \langle \tilde{\eta}_q, f \rangle_{H^s}.
\end{equation}

\begin{theorem}
\label{thm:op-norm-main-tight}
Under Assumption \ref{asm:manifold_dim}, assume $d_q \ge d_{\min} > 0$ for all $q \in V^N$, and $s \in (p/2, \min\{2, p/2 + 1\})$. For any confidence level $\delta \in (0,1)$, the operator norm satisfies
\begin{equation}
    \|\mathcal{D}_N^\omega\|_{H^s \to L^2} \le C_S  \left( \frac{\sqrt{2\ln(2/\delta)}}{\sqrt{d_{\min}}} + \frac{4\ln(2/\delta)}{3\sqrt{N}} \right)
\end{equation}
with probability at least $1-\delta$.
\end{theorem}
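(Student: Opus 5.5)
The plan is to write $\mathcal{D}_N^\omega$ as a sum of $N$ independent, zero-mean random elements of the Hilbert space $\mathcal{H}_{HS}$ of Hilbert--Schmidt operators $H^s(\mathcal{M})\to L^2(\mathcal{M})$, and then apply the vector Bernstein inequality of Lemma \ref{thm:hilbert-bernstein}. Since $\|\cdot\|_{H^s\to L^2}\le\|\cdot\|_{HS}$, a high-probability bound on $\|\mathcal{D}_N^\omega\|_{HS}$ suffices.

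\textbf{Step 1: the rank-one decomposition.} From \eqref{eq:def_E_N_derived}, $\mathcal{D}_N^\omega=\sum_{q=1}^N X_q$, where
\begin{equation*}
X_q\triangleq \mathbf{1}_{A_q}\otimes\tilde\eta_q, \qquad X_q f=\langle\tilde\eta_q,f\rangle_{H^s}\mathbf{1}_{A_q}.
\end{equation*}
The rows $\{L_j^{[N],q}\}_j$ are sampled independently across $q$, so the $X_q$ are independent. Each has mean zero, because $\mathbb{E}_z[\tilde\eta_q]=0$ by construction. The Hilbert--Schmidt norm of a rank-one operator factorizes, giving
\begin{equation*}
\|X_q\|_{HS}=\|\mathbf{1}_{A_q}\|_{L^2}\,\|\tilde\eta_q\|_{H^s}=N^{-1/2}\|\tilde\eta_q\|_{H^s}.
\end{equation*}

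\textbf{Step 2: the Bernstein parameters.} Lemma \ref{lem:eta_bounded_tight} gives the almost-sure bound $\|X_q\|_{HS}\le 2C_S/\sqrt N\triangleq M$. It also gives the second-moment bound $\mathbb{E}_z\|X_q\|_{HS}^2\le C_S^2/(N d_q)$. Summing over $q$ yields
\begin{equation*}
\sigma^2=\frac{C_S^2}{d_{\min}}.
\end{equation*}

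\textbf{Step 3: inverting the tail.} Set $2\exp\!\big(-\tfrac{t^2/2}{\sigma^2+Mt/3}\big)=\delta$ and write $\ell=\ln(2/\delta)$. This is the quadratic $t^2-\tfrac{2M\ell}{3}t-2\sigma^2\ell=0$. Its positive root satisfies
\begin{equation*}
t\le \frac{2M\ell}{3}+\sqrt{2\sigma^2\ell}
\end{equation*}
by $\sqrt{A+B}\le\sqrt A+\sqrt B$. Substituting the parameters gives
\begin{equation*}
t\le C_S\Big(\sqrt{2\ell/d_{\min}}+\frac{4\ell}{3\sqrt N}\Big),
\end{equation*}
which is the claimed bound, holding with probability at least $1-\delta$.

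\textbf{Main obstacle.} The delicate point is confirming that Lemma \ref{thm:hilbert-bernstein} applies in this setting. The space $\mathcal{H}_{HS}$ is separable, since both $H^s(\mathcal{M})$ and $L^2(\mathcal{M})$ are separable. The factorization of the Hilbert--Schmidt norm for $\mathbf{1}_{A_q}\otimes\tilde\eta_q$ is standard. The independence of the $X_q$ across rows is exactly what the graph construction provides.

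The standing hypotheses $s>p/2$ and $p\le 3$ enter through one place: they make the embedding $H^s\hookrightarrow L^\infty$ hold. That embedding gives the constant $C_S$ bounding the cell-average functionals, which is what Lemma \ref{lem:eta_bounded_tight} relies on.
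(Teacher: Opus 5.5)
Your proposal is correct and follows the paper's proof step for step: the rank-one decomposition $X_q=\mathbf{1}_{A_q}\otimes\tilde\eta_q$ in $\mathcal{H}_{HS}$, the Bernstein parameters $M=2C_S/\sqrt{N}$ and $\sigma^2=C_S^2/d_{\min}$ taken from Lemma \ref{lem:eta_bounded_tight}, and the inversion of the tail bound via $\sqrt{A+B}\le\sqrt{A}+\sqrt{B}$. One small correction: the embedding $H^s\hookrightarrow L^\infty$ needs only $s>p/2$, so the condition $p\le 3$ (and the upper bound on $s$) plays no role in this argument.
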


\begin{proof}
The operator decomposes as $\mathcal{D}_N^\omega = \sum_{q=1}^N X_q$, where each $X_q = \mathbf{1}_{A_q} \otimes \tilde{\eta}_q \in \mathcal{H}_{HS}$ and $\{X_q\}$ are independent with $\mathbb{E}_z[X_q] = 0$.

Since $\|X_q\|_{HS} = \|\mathbf{1}_{A_q}\|_{L^2} \|\tilde{\eta}_q\|_{H^s}$ and $\|\mathbf{1}_{A_q}\|_{L^2} = \sqrt{1/N}$, by Lemma \ref{lem:eta_bounded_tight}, we define the almost sure bound
\begin{equation}
    \|X_q\|_{HS} \le 2C_S \sqrt{\frac{1}{N}} \triangleq  M_N.
\end{equation}

The variance parameter is bounded by
\begin{equation}
    \sum_{q=1}^N \mathbb{E}_z \left[ \|X_q\|_{HS}^2 \right] = \sum_{q=1}^N \frac{1}{N} \mathbb{E}_z \left[ \|\tilde{\eta}_q\|_{H^s}^2 \right] \le \sum_{q=1}^N \frac{1}{N} \frac{C_S^2}{d_q}.
\end{equation}
Using $d_q \ge d_{\min}$, the above inequality reduces to $\sigma^2 = C_S^2  / d_{\min}$.

Applying Lemma \ref{thm:hilbert-bernstein} and using $\|\mathcal{D}_N^\omega\|_{H^s \to L^2} \le \|\mathcal{D}_N^\omega\|_{HS}$ yields
\begin{equation}
    \mathbb{P}_z \left( \|\mathcal{D}_N^\omega\|_{H^s \to L^2} \ge t \right) \le 2 \exp\left( -\frac{t^2/2}{\sigma^2 + M_N t/3} \right).
\end{equation}

Setting the right-hand side to $\delta$ and solving the quadratic equation for $t$ gives
\begin{align*}
    t &= \frac{\ln(2/\delta)}{3} M_N + \sqrt{ \left(\frac{\ln(2/\delta)}{3} M_N\right)^2 + 2\ln(2/\delta) \sigma^2 }\\
    &\le \frac{2\ln(2/\delta)}{3} M_N + \sqrt{2\ln(2/\delta)} \sigma.
\end{align*}
Substituting the expressions for $M_N$ and $\sigma$ produces the stated bound.
\end{proof}

\bibliography{references}
\end{document}